\documentclass[times]{nlaauthmod}

\pdfimageresolution=2000

\usepackage[bigdelims,cmintegrals,vvarbb]{newtxmath}
\usepackage[hidelinks,bookmarksopen,bookmarksnumbered]{hyperref}

\usepackage{subfig,multirow}
\usepackage{mathtools}
\usepackage{bm}
\usepackage{enumitem}

\input{defines.texx}
\allowdisplaybreaks[4]

\numberwithin{equation}{section}

\usepackage[noabbrev,capitalize]{cleveref}

\newtheorem{theorem}{Theorem}[section]
\newtheorem{proposition}[theorem]{Proposition}
\newtheorem{lemma}[theorem]{Lemma}

\newtheorem{definition}[theorem]{Definition}
\newtheorem{remark}[theorem]{Remark}

\crefname{equation}{}{}
\crefname{subsection}{subsection}{subsections}
\crefname{section}{section}{sections}

\newcommand{\email}[1]{\protect\href{mailto:#1}{#1}}

\renewcommand{\star}{*}

\begin{document}

\runningheads{D. Z. Kalchev, T. A. Manteuffel and S. M{\"u}nzenmaier}{\texorpdfstring{$(\cL\cL^\star)^{-1}$}{(LL*)\^{}\{-1\}} and \texorpdfstring{$\cL\cL^\star$}{LL*} least-squares finite element methods}

\title{Mixed $(\cL\cL^\star)^{-1}$ and $\cL\cL^\star$ least-squares finite element methods with application to linear hyperbolic problems}

\author{Delyan Z. Kalchev\affil{1}\corrauth, Thomas A. Manteuffel\affil{1} and Steffen M{\"u}nzenmaier\affil{1}\affil{2}}

\address{\affilnum{1}Department of Applied Mathematics, University of Colorado at Boulder, USA
         \break
         \affilnum{2}Fakult{\"a}t f{\"u}r Mathematik, Universit{\"a}t Duisburg-Essen, Germany}

\corraddr{Delyan Kalchev, Department of Applied Mathematics, 526 UCB, University of Colorado at Boulder, Boulder, CO 80309-0526, USA. Email: \email{delyan.kalchev@colorado.edu}}

\cgs{This work was performed under the auspices of the U.S. Department of Energy under grant numbers (SC) DE-FC02-03ER25574 and (NNSA) DE-NA0002376, Lawrence Livermore National Laboratory under contract B614452.}

\begin{abstract}
In this paper, a few dual least-squares finite element methods and their application to scalar linear hyperbolic problems are studied. The purpose is to obtain $L^2$-norm approximations on finite element spaces of the exact solutions to hyperbolic partial differential equations of interest. This is approached by approximating the generally infeasible quadratic minimization, that defines the $L^2$-orthogonal projection of the exact solution, by feasible least-squares principles using the ideas of the original $\cL\cL^\star$ method proposed in the context of elliptic equations. All methods in this paper are founded upon and extend the $\cL\cL^\star$ approach which is rather general and applicable beyond the setting of elliptic problems. Error bounds are shown that point to the factors affecting the convergence and provide conditions that guarantee optimal rates.
Furthermore, the preconditioning of the resulting linear systems is discussed. Numerical results are provided to illustrate the behavior of the methods on common finite element spaces.
%Numerical results are provided to illustrate the behavior of the methods on common finite element spaces.
\end{abstract}

\keywords{least-squares methods; dual methods; negative-norm methods; finite element methods; first-order hyperbolic problems; discontinuous coefficients; exponential layers; block preconditioners}

\maketitle

\section{Introduction}

Consider a scalar linear hyperbolic partial differential equation (PDE) of the form
\begin{equation}\label{eq:pde}
\begin{alignedat}{3}
\gradd \bb \psi + \sigma \psi &= r &&\quad\text{in }\Omega,\\
\psi &= g &&\quad\text{on }\Gamma_I,
\end{alignedat}
\end{equation}
where the simply connected domain $\Omega \subset \bbR^d$ ($d$ is the dimension of the Euclidean space), flow field $\bb \in [L^\infty(\Omega)]^d$, absorption coefficient $\sigma \in L^\infty(\Omega)$, source $r\in L^2(\Omega)$, and inflow boundary data\footnote{In general, the function $g$ is in a space on $\Gamma_I$ that can be larger than $L^2(\Gamma_I)$; see the trace results in \cite{2004LinearHyperbolic}. For our considerations, the space $L^2(\Gamma_I)$ is sufficiently rich for inflow boundary conditions.} $g \in L^2(\Gamma_I)$ are given and $\psi$ is the unknown dependent variable. Here, $\Gamma_I$ denotes the inflow portion of the boundary $\partial \Omega$, $\Gamma_I = \set{\bx \in \partial \Omega \where \bn(\bx)\cdot\bb(\bx) < 0}$, where $\bn$ is the unit outward normal to $\partial \Omega$.

Equations like \eqref{eq:pde} arise often in applications and can also serve as model problems towards solving more elaborate hyperbolic PDEs \cite{LewisTransport,LeVequeHCL,LeVequeHyperbolicPDE,GodlewskiHCL,LaxHyperbolicPDE,2000BoundaryTransport}.

The solution to \eqref{eq:pde} can be quite irregular -- exhibiting jump discontinuities or, depending on the contrast in $\sigma$, extremely steep exponential layers leading to large variations of the solution in neighboring subregions of $\Omega$. We are interested in obtaining approximations of the solution without utilizing any additional information on its features and using only information provided by the differential operator in \eqref{eq:pde}. In particular, we consider general unstructured meshes that are not aligned with the flow, $\bb$, i.e., the mesh does not follow the characteristics of \eqref{eq:pde}. Also, the mesh does not need to resolve steep exponential layers, i.e., on the scale of the mesh such layers can appear as jump discontinuities. Moreover, we aim at solving \eqref{eq:pde} as a global space-time problem (if one of the independent variables represents time) without applying any time-stepping scheme, i.e., $\Omega$ is a domain in the space-time.

Least-squares finite element methods have been extensively studied for problems of elliptic and parabolic types; see, e.g., \cite{BochevLSFEM,1998LSFEM,1994FOSLS1,1997FOSLS2,JiangLSFEM,1995FOSLSStokes1,1997FOSLSStokes2,1998FOSLSNS1,1999FOSLSNS2}. They have also been applied to hyperbolic problems, including of the type \eqref{eq:pde}; cf., \cite{2004LinearHyperbolic,2001ErrorLS,2001ComparativeLS,1988LSHypSystems}, see also \cite{OlsonPhDthesis,2005HdivHyperbolic,2005LSShallowWater,2002DGLSStab}. These methods exhibit substantial numerical dissipation, unless proper scaling is implemented, which may include utilizing information about the characteristics of the problem and the respective features of the solution \cite{clsscaling}. Dissipation results in stable methods and least-squares have been used to augment Galerkin formulations to stabilize them; see, e.g, \cite{2002DGLSStab}. However, excessive dissipation can lead to unsatisfactory quality of the approximation. In our experience, this especially holds when large jumps in $\sigma$ cause very steep exponential layers in the solution that are not resolved by the mesh.

In this paper, we address these issues (the solution irregularity, unstructured meshes not resolving steep exponential layers, and the excessive numerical dissipation) by seeking approximations in the $L^2(\Omega)$ norm. Note that the least-squares methods \cite{2004LinearHyperbolic,2001ErrorLS} possess coercivity in a norm stronger than the $L^2(\Omega)$ norm, so they control the $L^2$-norm error but it can remain relatively large until the mesh size is sufficiently small to begin resolving the features of the solution. This contributes to the amount of numerical dissipation in the least-squares methods. In contrast, we approach the $L^2$-norm approximation more directly. The $(\cL\cL^\star)^{-1}$ and $\cL\cL^\star$ methods considered in this paper are based on least-squares principles, which, in a sense, approximate the minimization that defines the best $L^2$-norm approximation. 

Generally, given $f\in L^2(\Omega)$ and a linear first-order differential operator, $L$, our goal is to solve an equation of the form
\begin{equation}\label{eq:opeq}
L u = f,
\end{equation}
for the unknown $u\in \cD(L)$, where $\cD(L)$ denotes the domain of $L$. The general definition of $\cD(L)$ is provided in \cref{sec:notation} and in \cref{sec:application} the particular definition for \eqref{eq:pde} is shown. Equation \eqref{eq:pde} can be reduced to \eqref{eq:opeq} using superposition, since, in this case, the functions in $\cD(L)$ vanish on $\Gamma_I$. In practice, solving \eqref{eq:opeq} is addressed by numerically approximating the exact solution, $\hu \in \cD(L)$, of equation \eqref{eq:opeq}. The focus of this paper is on obtaining finite element approximations of $\hu$ with respect to the $L^2(\Omega)$ norm, denoted $\lV\cdot\rV$. Given a finite element space $\cU^h$, the best $L^2$-norm approximation of $\hu$ is defined by the minimization
\begin{equation}\label{eq:L2min}
u^h = \argmin_{v^h \in \cU^h} \lV v^h - \hu \rV^2,
\end{equation}
where the minimizer, $u^h$, is the $L^2$-orthogonal projection of $\hu$ onto $\cU^h$. The minimization problem \eqref{eq:L2min} can be reformulated as a standard $\cL\cL^\star$ method \cite{2001FOSLLstar,2005FOSLLstarGeneral}, but only for a special choice of the finite element space. However, for general $\cU^h$, the $L^2$-orthogonal projection of $\hu$ onto $\cU^h$ cannot be directly computed (unless the exact solution, $\hu$, is readily known). The idea here is to replace \eqref{eq:L2min} with a similar, but computationally feasible, minimization problems using an additional (auxiliary) finite element space and applying the ideas of the standard $\cL\cL^\star$ and negative-norm methods; see, e.g., \cite{1997NegFOSLS} for an $H^{-1}$ approach to elliptic problems. In comparison, the standard $\cL\cL^\star$ method obtains the best $L^2(\Omega)$ approximation under the compromise of using a particular and nonstandard finite element space, whereas the $(\cL\cL^\star)^{-1}$ and $\cL\cL^\star$-type methods studied in this paper allow utilizing standard finite element spaces but generally do not provide precisely the $L^2$-orthogonal projection of the exact solution.

Several methods are studied and compared in this paper. In particular, the $(\cL\cL^\star)^{-1}$ method is in the class of negative-norm least-squares methods. However, unlike a more standard $H^{-1}$ approach, the $(\cL\cL^\star)^{-1}$ method is better tailored to the particular problem \eqref{eq:opeq}. Namely, the isomorphism $(-\lap)^{-1}$ in the $H^{-1}$ method is replaced\footnote{In view of the weak formulations of these isomorphisms, this can be stated as: the gradient, $\grad$, is replaced by $L^\star$ -- the $L^2$-adjoint of $L$.} by the isomorphism $(L_wL^\star)^{-1}$ (this notation is clarified below). In general, the norm $\lV L(\cdot) \rV_{-1}$ does not control the $L^2(\Omega)$ norm; in fact, it is not even discretely (i.e., on any collection of finite element spaces) $L^2$-coercive \cite{OlsonPhDthesis}. This is associated with the difficulty in analyzing the $L^2$-convergence of the $H^{-1}$-based method in \cite{2005HdivHyperbolic,OlsonPhDthesis} (and its related $H(\div)$-conforming method). In contrast, we observe that replacing $\lV \cdot \rV_{-1}$ with the dual norm corresponding to $(L_wL^\star)^{-1}$ precisely recovers the $L^2(\Omega)$ norm. In practice, this desirable property of $(L_wL^\star)^{-1}$ is lost when the operator is approximated by a discrete version. We demonstrate that under certain conditions a discrete $L^2$-coercivity of the $(\cL\cL^\star)^{-1}$ method remains valid which is sufficient for obtaining optimal convergence rates. All methods studied in this paper converge in the $L^2(\Omega)$ norm. Since operators play such an important role in our considerations, we provide an overview of the properties of the operators of interest here.

Negative-norm least-squares methods can be viewed as particular Petrov-Galerkin finite element methods, since Petrov-Galerkin methods constitute a very wide class; see \cite{2010DPGTransport,2011DPGTestFunc} and the references therein. This paper follows a slightly different path, in a sense, more in the spirit of least-squares methods. Namely, we extend the standard $\cL\cL^\star$ method of \cite{2001FOSLLstar} either by further projections onto $\cU^h$ constituting the $\cL\cL^\star$-type methods, or by employing a related negative-norm minimization resulting in the $(\cL\cL^\star)^{-1}$ method. All methods of this paper are fundamentally based on the original $\cL\cL^\star$ minimization principle in \cite{2001FOSLLstar}. The relation to Petrov-Galerkin methods is interesting in its own right. The potential of further extending the $\cL\cL^\star$ approach using the (discontinuous) Petrov-Galerkin framework is a subject of future work.

The main contributions of this paper are summarized as follows. The novel $(\cL\cL^\star)^{-1}$ formulation is proposed and analyzed. Also, the idea of formulating a negative-norm least-squares method as a ``saddle-point problem'', to our knowledge, does not exist in the literature. A more typical approach is the one in \cite{1997NegFOSLS}, where the conjugate gradient method is directly applied to minimize the functional of interest. For practical purposes, they use a preconditioner (an approximate inverse of an operator) that effectively modifies the least-squares principle. In contrast, the approach here allows utilization of the original (unmodified) minimization principle. Note that the norm in the $(\cL\cL^\star)^{-1}$ method differs from the one in \cite{1997NegFOSLS}. Moreover, additional difficulties arise when using the conjugate gradient method for a modified least-squares principle in the context of hyperbolic PDEs; see \cref{sec:implementation}. The standard $\cL\cL^\star$ method is not new; it is formulated in \cite{2001FOSLLstar} in the context of elliptic problems. The single- and two-stage methods are simple extensions of the original $\cL\cL^\star$ approach. Although not in such a pure form, they can be seen as a part of the hybrid method in \cite{2013HybridLS}. The application of the $\cL\cL^\star$, single-, and two-stage methods to hyperbolic problems is, however, a new development. Most notably, the error analysis in \cref{sec:llstar} of the single- and two-stage methods in terms of the approximation properties of the involved finite element spaces was not previously known.

The outline of the rest of the paper is the following. Basic notions and assumptions are presented in \cref{sec:notation}. \Cref{sec:properties} contains a systematic overview of the properties of the operators of interest. In \cref{sec:LLstarinverse}, the $(\cL\cL^\star)^{-1}$ method is formulated and analyzed. \Cref{sec:llstar} is devoted to the $\cL\cL^\star$-type methods and their comparison to the $(\cL\cL^\star)^{-1}$ method.
%In \cref{sec:implementation}, we comment on certain particularities associated with the implementation of the methods.
In \cref{sec:implementation}, we comment on the implementation of the methods and the preconditioning of the respective linear systems.
The specifics of applying the methods to \eqref{eq:pde} are discussed in \cref{sec:application}. Particular numerical results are collected in \cref{sec:numerical} and the conclusion and possible future work are in the final \cref{sec:conclusions}.

\section{Notation, definitions, and assumptions}
\label{sec:notation}

Here, useful notation and definitions are presented. Also, a pair of basic assumptions is stated.

Consider a domain $\Omega \subset \bbR^d$ and a linear first-order differential operator, $L$, (i.e., it is a closed unbounded operator). The norm on $L^2(\Omega)$ is denoted by $\lV \cdot \rV$ and $\li \cdot,\cdot \ri$ is the respective inner product. The domain of $L$ is defined as
\[
\cD(L) = \set{u \in L^2(\Omega) \where Lu \in L^2(\Omega) \text{ and } Bu = 0},
\]
where $Bu = 0$ represents appropriate homogeneous boundary conditions. Note that $L$ is densely defined in the sense that $\cD(L)$ is dense in $L^2(\Omega)$. This is easy to see, since, clearly, the infinitely smooth compactly supported functions on $\Omega$ are contained in $\cD(L)$. Thus, $L^\star$, the $L^2$-adjoint of $L$, is a well-defined closed linear operator \cite{YosidaFA}. In general, the adjoint operator $L^\star$ and its domain, $\cD(L^\star)$, are defined as follows: if for $w \in L^2(\Omega)$ there exists $q \in L^2(\Omega)$ such that
\[
\li L u, w \ri = \li u, q \ri,\quad \forall u \in \cD(L),
\]
then we say that $w \in \cD(L^\star)$ and $L^\star w = q$. It is convenient to express $\cD(L^\star)$ as
\[
\cD(L^\star) = \set{w \in L^2(\Omega) \where L^\star w \in L^2(\Omega) \text{ and } B^\star w = 0},
\]
where $B^\star w = 0$ are the adjoint homogeneous boundary conditions. Moreover, it is known \cite{YosidaFA} that $L$ being densely defined and closed implies that $L^\star$ is also densely defined and $(L^\star)^\star = L$.

Assume that $L^\star$ satisfies a Poincar{\'e}-type inequality and that it is surjective. That is, for $c_P^\star > 0$,
\begin{align}
&c_P^\star \lV w \rV \le \lV L^\star w \rV, \quad \forall w \in \cD(L^\star),\label{asu:LstarPoincare}\tag{ASM 1}\\
&L^\star(\cD(L^\star)) = L^2(\Omega).\label{asu:Lstarsurjective}\tag{ASM 2}
\end{align}
The motivation behind these assumptions is that they are important for the theory in \cref{sec:properties} and they are satisfied by the problem of interest \eqref{eq:pde}. This is discussed in \cref{sec:application}.

Notice that assumption \eqref{asu:LstarPoincare} implies that $\cD(L^\star)$ is a Hilbert space with respect to $\lV \cdot \rV_{\cD(L^\star)} = \lV L^\star(\cdot) \rV$ and this norm is equivalent to the respective graph norm on $\cD(L^\star)$. That is, for $c_G^\star > 0$,
\[
c_G^\star(\lV w \rV^2 + \lV L^\star w \rV^2) \le \lV L^\star w \rV^2 \le \lV w \rV^2 + \lV L^\star w \rV^2, \quad \forall w \in \cD(L^\star).
\]

Denote the dual space of $\cD(L^\star)$ by $\cD'(L^\star)$. The associated functional norm is
\[
\lV \ell \rV_{\cD'(L^\star)} = \sup_{w\in \cD(L^\star)} \frac{\lv \ell(w) \rv}{\lV L^\star w \rV}, \quad \forall \ell \in \cD'(L^\star).
\]
To simplify notation, it is understood that $w \ne 0$ in the supremum and this convention is used throughout the paper. This leads to the following definitions.

\begin{definition}
Let $q \in L^2(\Omega)$ and consider the functional $\vartheta_q(w) = \li q,  L^\star w \ri$ for all $w \in \cD(L^\star)$. It is easy to see that $\vartheta_q \in \cD'(L^\star)$. Define the linear map $L_w\col L^2(\Omega) \to \cD'(L^\star)$ as $L_w q = \vartheta_q$ for all $q \in L^2(\Omega)$. The operator $L_w$ is the ``weak version'' of $L$, defined on the whole $L^2(\Omega)$.
\end{definition}

\begin{definition}\label{def:LLstarweak}
The linear map $(L_w L^\star)^{-1}\col \cD'(L^\star) \to \cD(L^\star)$ is defined through the solution of the weak problem
\begin{equation}\label{eq:LLstarweak}
\text{Find } z\in \cD(L^\star)\col \li L^\star z, L^\star w \ri = \ell(w), \quad \forall w \in \cD(L^\star),
\end{equation}
where $\ell \in \cD'(L^\star)$. That is, if $\hz \in \cD(L^\star)$ solves \eqref{eq:LLstarweak}, then $(L_w L^\star)^{-1} \ell = \hz$.
\end{definition}

Owing to \eqref{asu:LstarPoincare} and the Riesz theorem, \eqref{eq:LLstarweak} has a unique solution. Hence, $(L_w L^\star)^{-1}$ is well-defined. The notation is motivated by $(L_w L^\star)^{-1} = (L^\star)^{-1} L_w^{-1}$, which can be shown.

\begin{remark}\label{rem:equivasu}
Assumption \eqref{asu:Lstarsurjective} is equivalent (see \cite[Theorems 2.20 and 2.21]{BrezisFA}) to the assumption
\begin{equation}\label{asu:LPoincare}\tag{ASM 3}
c_P \lV u \rV \le \lV L u \rV, \quad \forall u \in \cD(L),
\end{equation}
for some constant $c_P > 0$. As above, \eqref{asu:LPoincare} implies that $\cD(L)$ is a Hilbert space with respect to $\lV \cdot \rV_{\cD(L)} = \lV L(\cdot) \rV$ and this norm is equivalent to the respective graph norm on $\cD(L)$. Similarly, \eqref{asu:LstarPoincare} is equivalent to the assumption that $L \col \cD(L) \to L^2(\Omega)$ is surjective, $L(\cD(L)) = L^2(\Omega)$.
\end{remark}

\section{Properties of the operators}
\label{sec:properties}

This section is devoted to an overview of the properties of the operators introduced in \cref{sec:notation}. The main idea is to characterize the $L^2(\Omega)$ norm in terms of the norm in $\cD'(L^\star)$ and to properly represent the functional norm, aiming at obtaining, in \cref{sec:LLstarinverse}, an appropriate computable approximation of the $L^2$-norm minimization \eqref{eq:L2min}. More analytical details can be found in \cite{PhDthesis}.

To aid precision and clarity, note that $L^2(\Omega)$ can be embedded into $\cD'(L^\star)$. Indeed, the embedding operator $\cE\col L^2(\Omega) \to \cD'(L^\star)$ is defined as $\cE q = \ell_q$, for all $q \in L^2(\Omega)$, where $\ell_q(w) = \li q, w\ri$ for all $w \in \cD(L^\star)$. Using \eqref{asu:LstarPoincare}, it is easy to see that $\ell_q \in \cD'(L^\star)$ and $\cE$ is a bounded linear operator, representing the continuous embedding of $L^2(\Omega)$ into $\cD'(L^\star)$.

The operator $(L_w L^\star)^{-1} \cE$ maps $L^2(\Omega)$ into $\cD(L^\star)$. Owing to \eqref{eq:LLstarweak} and the definition of $\cE$, for $q\in L^2(\Omega)$, $(L_w L^\star)^{-1} \cE q$ equals the solution of the weak problem
\begin{equation}\label{eq:LLstarweakL2}
\text{Find } z\in \cD(L^\star)\col \li L^\star z, L^\star w \ri = \li q, w \ri, \quad \forall w \in \cD(L^\star).
\end{equation}
As customary, for simplicity, we skip the embedding, $\cE$, in the notation for the operator $(L_w L^\star)^{-1} \cE$ and consider $(L_w L^\star)^{-1}\col L^2(\Omega) \to \cD(L^\star)$, if necessary, defined through the solution of the weak problem \eqref{eq:LLstarweakL2}.

The motivation behind the operator $L_w$ is that it extends $L$ (in fact, it extends $\cE L$) on $L^2(\Omega)$, in the sense that $L_w$ coincides with $\cE L$ on $\cD(L)$. This allows the general characterization of the norm in $L^2(\Omega)$ over the entire space. The result is important in the formulation of the $(\cL\cL^*)^{-1}$ method, since, as demonstrated in the next section, it essentially moves the infeasibility of \eqref{eq:L2min}, caused by the presence of the exact solution, $\hu$, to the functional norm in $\cD'(L^\star)$.

\begin{theorem}[characterization of the $L^2$ norm]\label{thm:L2char}
The operator $L_w\col L^2(\Omega) \to \cD'(L^\star)$ is a bijective isometry, showing that $\lV q \rV = \lV L_w q \rV_{\cD'(L^\star)}$, for all $q\in L^2(\Omega)$. In particular, $\lV u \rV = \lV \cE L u \rV_{\cD'(L^\star)}$, for all $u\in \cD(L)$.
\end{theorem}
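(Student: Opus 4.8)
The plan is to establish the isometry identity first and then read off bijectivity and the ``in particular'' statement from it. The whole argument is short, and essentially everything hinges on a single change-of-variable step enabled by the surjectivity of $L^\star$.

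The starting point is the definition of the functional norm applied to $L_w q$: for a fixed $q \in L^2(\Omega)$,
\[
\lV L_w q \rV_{\cD'(L^\star)} = \sup_{w \in \cD(L^\star)} \frac{\lv \li q, L^\star w \ri \rv}{\lV L^\star w \rV}.
\]
The key observation is that, by the surjectivity assumption \eqref{asu:Lstarsurjective}, the set $\set{L^\star w \where w \in \cD(L^\star)}$ is exactly all of $L^2(\Omega)$. Thus the substitution $v = L^\star w$ converts the supremum over $w \in \cD(L^\star)$ into a supremum over all of $v \in L^2(\Omega)$, yielding $\sup_{v} \lv \li q, v \ri \rv / \lV v \rV$, which equals $\lV q \rV$ by the standard duality characterization of the $L^2$ norm (Cauchy--Schwarz for the bound $\le$, and the choice $v = q$ for $\ge$). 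This proves $\lV L_w q \rV_{\cD'(L^\star)} = \lV q \rV$, i.e., that $L_w$ is an isometry. This is the heart of the argument and the only place where \eqref{asu:Lstarsurjective} is genuinely used; I expect the only real care to be needed in justifying that restricting the test functions to the image of $L^\star$ does not shrink the supremum, which is exactly what surjectivity supplies.

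Bijectivity then follows quickly. Injectivity is immediate, since $L_w q = 0$ forces $\lV q \rV = 0$ by the isometry, whence $q = 0$. For surjectivity I would invoke the well-posedness of \eqref{eq:LLstarweak} already noted after \cref{def:LLstarweak}: given $\ell \in \cD'(L^\star)$, let $\hz \in \cD(L^\star)$ be its unique solution, so that $\li L^\star \hz, L^\star w \ri = \ell(w)$ for all $w \in \cD(L^\star)$; setting $q = L^\star \hz \in L^2(\Omega)$ gives $\li q, L^\star w \ri = \ell(w)$, that is, $L_w q = \ell$.

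Finally, for the ``in particular'' statement, I would verify that $L_w$ extends $\cE L$ on $\cD(L)$. For $u \in \cD(L)$ and any $w \in \cD(L^\star)$, the defining property of the adjoint gives $\li L u, w \ri = \li u, L^\star w \ri$, so $(\cE L u)(w) = \li L u, w \ri = \li u, L^\star w \ri = (L_w u)(w)$, hence $\cE L u = L_w u$. Applying the isometry with $q = u$ then gives $\lV u \rV = \lV L_w u \rV_{\cD'(L^\star)} = \lV \cE L u \rV_{\cD'(L^\star)}$, which is the desired identity and completes the proof.
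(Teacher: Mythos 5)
Your proof is correct and follows essentially the same route as the paper's: the isometry via the surjectivity assumption \eqref{asu:Lstarsurjective} (the substitution $v = L^\star w$), injectivity as an immediate consequence, and surjectivity via the unique solution $\hz = (L_w L^\star)^{-1}\ell$ of \eqref{eq:LLstarweak} with $\hq = L^\star \hz$. The only difference is that you explicitly verify $L_w u = \cE L u$ on $\cD(L)$ for the ``in particular'' claim, a detail the paper delegates to the discussion preceding the theorem.
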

\begin{proof}
Using the surjectivity of $L^\star$ in \eqref{asu:Lstarsurjective}, $L_w$ is an isometry since
\[
\lV L_w q \rV_{\cD'(L^\star)} = \sup_{w\in \cD(L^\star)} \frac{\lv \li q, L^\star w \ri \rv}{\lV L^\star w \rV} = \lV q \rV.
\]
Since $L_w$ is an isometry, it is injective. Consider arbitrary $\ell \in \cD'(L^\star)$ and let $\hz = (L_w L^\star)^{-1} \ell \in \cD(L^\star)$. By setting $\hq = L^\star \hz \in L^2(\Omega)$, it follows from \eqref{eq:LLstarweak} that $L_w \hq = \ell$. Thus, $L_w$ is surjective.
\end{proof}

It is not practical to work directly with a dual norm like $\lV\cdot\rV_{\cD'(L^\star)}$. Therefore, the operator $(L_w L^\star)^{-1}\col \cD'(L^\star) \to \cD(L^\star)$ is considered. As implied by the following lemma, it is the Riesz isomorphism between $\cD(L^\star)$ and $\cD'(L^\star)$, i.e., it is the isomorphism between a Hilbert space and its dual, mapping functionals to their representations with respect to the inner product in the Hilbert space, in accordance with the Riesz representation theorem. In essence, $(L_w L^\star)^{-1}$ is the analog of the inverse Laplace operator in $H^{-1}$-type methods.

\begin{lemma}\label{lem:LLstarinv}
The operator $(L_w L^\star)^{-1}\col \cD'(L^\star) \to \cD(L^\star)$ is a bijective isometry.
\end{lemma}
\begin{proof}
Owing to \eqref{eq:LLstarweak}, it is an isometry since
\[
\lV \ell \rV_{\cD'(L^\star)} = \sup_{w\in \cD(L^\star)} \frac{\lv \ell(w) \rv}{\lV L^\star w \rV} = \sup_{w\in \cD(L^\star)} \frac{\lv \li L^\star \hz, L^\star w \ri \rv}{\lV L^\star w \rV} = \lV L^\star \hz \rV = \lV \hz \rV_{\cD(L^\star)} = \lV (L_w L^\star)^{-1} \ell \rV_{\cD(L^\star)},
\]
where $\hz = (L_w L^\star)^{-1} \ell$. Since $(L_w L^\star)^{-1}$ is an isometry, it is injective. Owing to \eqref{asu:LstarPoincare}, \eqref{eq:LLstarweak}, and the Riesz theorem, it follows that $(L_w L^\star)^{-1}$ is surjective. Indeed, for any $z\in \cD(L^\star)$, consider $\ell^\star_z \in \cD'(L^\star)$ defined as $\ell^\star_z(w) = \li L^\star z, L^\star w \ri$ for all $w \in \cD(L^\star)$. Then, $(L_w L^\star)^{-1} \ell^\star_z = z$.
\end{proof}

\Cref{lem:LLstarinv} allows to characterize the inner product in $\cD'(L^\star)$ using the operator $(L_w L^\star)^{-1}$. This is important for the considerations in \cref{sec:LLstarinverse}, since by approximating $(L_w L^\star)^{-1}$, the $\cD'(L^\star)$ norm is approximated, thus obtaining, in view of \cref{thm:L2char}, computationally feasible approximations of the $L^2(\Omega)$ norm and the minimization \eqref{eq:L2min}. In practical finite element formulations, the $\cD'(L^\star)$ inner product characterization is needed for functions in $L^2(\Omega)$. This is the motivation behind the following result. It shows that $\li (L_w L^\star)^{-1} \cdot, \cdot \ri$ defines an inner product in $L^2(\Omega)$, which is precisely the inner product associated with $\lV \cdot \rV_{\cD'(L^\star)}$, but restricted, via the embedding $\cE$, to $L^2(\Omega)$.

\begin{theorem}\label{thm:L2SAPD_LLstarinv}
The operator $(L_w L^\star)^{-1}\col L^2(\Omega) \to \cD(L^\star)$ is self-adjoint and positive definite with respect to the $L^2(\Omega)$ inner product, and $\lV \cE q \rV_{\cD'(L^\star)}^2 = \li (L_w L^\star)^{-1} q, q \ri$, for all $q\in L^2(\Omega)$.
\end{theorem}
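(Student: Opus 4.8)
The plan is to work with the restricted operator $T \ceq (L_wL^\star)^{-1}\cE \col L^2(\Omega)\to\cD(L^\star)\subset L^2(\Omega)$, which by the weak problem \eqref{eq:LLstarweakL2} sends $q$ to the unique $z_q\in\cD(L^\star)$ satisfying $\li L^\star z_q, L^\star w\ri = \li q, w\ri$ for all $w\in\cD(L^\star)$. The single computational device behind all three assertions is to insert one solution as the test function in the other solution's weak problem; since every $z_q$ lies in $\cD(L^\star)$, which is exactly the test space, such substitutions are always legitimate.

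First I would establish symmetry. For $p,q\in L^2(\Omega)$ set $z_p = Tp$ and $z_q = Tq$. Taking $w=z_q$ in the weak problem for $z_p$ gives $\li L^\star z_p, L^\star z_q\ri = \li p, z_q\ri = \li p, Tq\ri$, while taking $w=z_p$ in the weak problem for $z_q$ gives $\li L^\star z_q, L^\star z_p\ri = \li q, Tp\ri = \li Tp, q\ri$. Since the bilinear form $\li L^\star\cdot, L^\star\cdot\ri$ is symmetric, the two left-hand sides coincide, whence $\li Tp, q\ri = \li p, Tq\ri$. Because $T$ is everywhere defined and bounded on $L^2(\Omega)$ -- boundedness follows by testing its own weak problem with $w=z_q$ and combining Cauchy--Schwarz with \eqref{asu:LstarPoincare} -- symmetry is the same as self-adjointness.

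For positive definiteness, the same substitution with $p=q$ yields the key identity $\li Tq, q\ri = \li L^\star z_q, L^\star z_q\ri = \lV L^\star z_q\rV^2 \ge 0$. If this vanishes, then $L^\star z_q = 0$, so \eqref{asu:LstarPoincare} forces $z_q = 0$; the weak problem then gives $\li q, w\ri = 0$ for all $w\in\cD(L^\star)$, and the density of $\cD(L^\star)$ in $L^2(\Omega)$ (a consequence of $L^\star$ being densely defined) forces $q=0$. This is the only place where the hypotheses genuinely enter, and it is the step I would be most careful about, since it relies simultaneously on the Poincar{\'e} inequality and the density of the test space.

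Finally, the norm identity falls out of the same computation combined with \cref{lem:LLstarinv}. Since $\cE q = \ell_q$ and $(L_wL^\star)^{-1}$ is an isometry from $\cD'(L^\star)$ onto $\cD(L^\star)$, one has $\lV\cE q\rV_{\cD'(L^\star)} = \lV (L_wL^\star)^{-1}\ell_q\rV_{\cD(L^\star)} = \lV L^\star z_q\rV$; squaring and invoking $\lV L^\star z_q\rV^2 = \li Tq, q\ri$ from the previous step gives $\lV\cE q\rV_{\cD'(L^\star)}^2 = \li (L_wL^\star)^{-1} q, q\ri$, as claimed. I expect no serious obstacle here: all three properties are consequences of the single identity $\li Tp, q\ri = \li L^\star z_p, L^\star z_q\ri$, with the assumptions \eqref{asu:LstarPoincare} and density supplying only the definiteness.
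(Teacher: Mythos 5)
Your proof is correct and takes essentially the same approach as the paper: both rest on substituting each solution as a test function in the other's weak problem \eqref{eq:LLstarweakL2} to obtain $\li T p, q \ri = \li L^\star z_p, L^\star z_q \ri$, and both invoke \cref{lem:LLstarinv} for the norm identity. The extra details you supply (boundedness of $T$ to upgrade symmetry to self-adjointness, and the Poincar\'e-plus-density argument for strict definiteness) merely make explicit what the paper's proof leaves implicit.
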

\begin{proof}
Given $p,q\in L^2(\Omega)$, by \eqref{eq:LLstarweakL2}, it follows
\begin{equation}\label{eq:selfadj}
\begin{split}
\li q, (L_w L^\star)^{-1} \cE p \ri &= \li L^\star (L_w L^\star)^{-1} \cE q, L^\star (L_w L^\star)^{-1} \cE p \ri\\
&= \li L^\star (L_w L^\star)^{-1} \cE p, L^\star (L_w L^\star)^{-1} \cE q \ri = \li p, (L_w L^\star)^{-1} \cE q \ri.
\end{split}
\end{equation}
Following the discussion below \eqref{eq:LLstarweakL2}, $(L_w L^\star)^{-1} \cE$ is shortly denoted as $(L_w L^\star)^{-1}\col L^2(\Omega) \to \cD(L^\star)$. Therefore, the equality in \eqref{eq:selfadj} can be written as $\li q, (L_w L^\star)^{-1} p \ri = \li p, (L_w L^\star)^{-1} q \ri$. \Cref{lem:LLstarinv} and \eqref{eq:selfadj}, for $p = q$, imply $\lV \cE q \rV_{\cD'(L^\star)}^2 = \li q, (L_w L^\star)^{-1} \cE q \ri = \li q, (L_w L^\star)^{-1} q \ri$. Finally, $\li (L_w L^\star)^{-1} q, q \ri = \lV \cE q \rV_{\cD'(L^\star)}^2 \ge 0$, where the equality holds if and only if $q = 0$.
\end{proof}

It is easy to see, using \eqref{asu:LstarPoincare} and \cref{lem:LLstarinv}, that the bilinear form $\li (L_w L^\star)^{-1} \cdot, \cdot \ri$ is continuous on $L^2(\Omega)$, reflecting that $\lV \cdot \rV_{\cD'(L^\star)}$ is weaker than $\lV \cdot \rV$ on $L^2(\Omega)$. In general, $(L_w L^\star)^{-1}$ is not necessarily $L^2$-coercive (strictly positive definite). That is, $\li (L_w L^\star)^{-1} q, q \ri \ge \alpha \lV q \rV^2$, for all $q \in L^2(\Omega)$, does not necessarily hold for any constant $\alpha > 0$. In view of \cref{thm:L2SAPD_LLstarinv}, this reflects the fact that the $L^2(\Omega)$ norm is generally strictly stronger than the $\cD'(L^\star)$ norm on $L^2(\Omega)$.

Note that \eqref{asu:LstarPoincare} provides that the operator $(L_w L^\star)^{-1}$ is well-defined, $\cD(L^\star)$ is a Hilbert space with respect to $\lV \cdot \rV_{\cD(L^\star)} = \lV L^\star(\cdot) \rV$, and is used in the proofs of \cref{lem:LLstarinv,thm:L2SAPD_LLstarinv}. The closedness of $L$ provides $(L^\star)^\star = L$, while \eqref{asu:Lstarsurjective} is used in the proof of \cref{thm:L2char}.

\section{The \texorpdfstring{$(\cL\cL^\star)^{-1}$}{(LL*)\^{}\{-1\}} method}
\label{sec:LLstarinverse}

Here, the $(\cL\cL^\star)^{-1}$ method is presented. First, it is formulated. Next, the corresponding linear algebra equations are discussed. Finally, the properties of the discrete formulation are studied.

\subsection{Motivation and formulation}

Let $\cU^h$ be a finite element space and consider \eqref{eq:opeq}. For simplicity, $\cU^h \subset \cD(L)$ in this section. The extension of the formulation to more general finite element spaces is discussed in \cref{sec:extension}. The purpose is to obtain $u^h \in \cU^h$ that approximates the exact solution of \eqref{eq:opeq} in the $L^2(\Omega)$ norm. Owing to \cref{thm:L2char,thm:L2SAPD_LLstarinv}, the minimization \eqref{eq:L2min} can be equivalently expressed as
\begin{equation}\label{eq:L2minconf}
u^h = \argmin_{v^h \in \cU^h} \lV \cE L (v^h - \hu) \rV_{\cD'(L^\star)}^2 = \argmin_{v^h \in \cU^h} \li (L_w L^\star)^{-1} (L v^h - f), L v^h - f \ri,
\end{equation}
where $\hu \in \cD(L)$ denotes the exact solution of \eqref{eq:opeq}. Owing to the symmetry in \cref{thm:L2SAPD_LLstarinv}, this leads to the weak problem
\begin{equation}\label{eq:L2minconfweak}
\text{Find } u^h \in \cU^h\col \li (L_w L^\star)^{-1} L u^h, L v^h \ri = \li (L_w L^\star)^{-1} f, L v^h \ri,\quad\forall v^h \in \cU^h.
\end{equation}

Observe that \eqref{eq:L2minconf} and \eqref{eq:L2minconfweak} are not computationally feasible, since the effect of $(L_w L^\star)^{-1}$ cannot be computed in general. Therefore, a computable discrete version of $(L_w L^\star)^{-1}$ is necessary. To this end, consider an additional (auxiliary) finite element space $\cZ^\fh \subset \cD(L^\star)$. The discrete version of $(L_w L^\star)^{-1}$ is obtained from the discrete version of \eqref{eq:LLstarweak}, as described in the following definition.

\begin{definition}
The linear map $(L_w L^\star)_\fh^{-1}\col \cD'(L^\star) \to \cZ^\fh$ is defined through the solution of the discrete weak problem, for $\ell \in \cD'(L^\star)$,
\begin{equation}\label{eq:LLstarweakhgen}
\text{Find } z^\fh\in \cZ^\fh\col \li L^\star z^\fh, L^\star w^\fh \ri = \ell(w^\fh), \quad \forall w^\fh \in \cZ^\fh.
\end{equation}
As previously, when convenient, the operator $(L_w L^\star)_\fh^{-1}\col L^2(\Omega) \to \cZ^\fh$ is considered (via the embedding $\cE$), in which case, for $q \in L^2(\Omega)$, \eqref{eq:LLstarweakhgen} takes the form
\begin{equation}\label{eq:LLstarweakh}
\text{Find } z^\fh\in \cZ^\fh\col \li L^\star z^\fh, L^\star w^\fh \ri = \li q, w^\fh \ri, \quad \forall w^\fh \in \cZ^\fh.
\end{equation}
\end{definition}

Now, \eqref{eq:L2minconf} and \eqref{eq:L2minconfweak} can be approximated by a computable method by replacing $(L_w L^\star)^{-1}$ with $(L_w L^\star)_\fh^{-1}$. This results in the following:
\begin{gather}
u^h = \argmin_{v^h \in \cU^h} \li (L_w L^\star)_\fh^{-1} (L v^h - f), L v^h - f \ri,\label{eq:minLLstarinv}\\
\text{Find } u^h \in \cU^h\col \li (L_w L^\star)_\fh^{-1} L u^h, L v^h \ri = \li (L_w L^\star)_\fh^{-1} f, L v^h \ri,\quad\forall v^h \in \cU^h,\label{eq:LLstarinv}
\end{gather}
which constitutes the discrete $(\cL\cL^\star)^{-1}$ formulation. Alternatively, \eqref{eq:LLstarweakh} and \eqref{eq:LLstarinv} can be combined into the system
\begin{equation}\label{eq:LLstarinvsystem}
\text{Find } (u^h, z^\fh)\in \cU^h \times \cZ^\fh \col
\left\{
\begin{alignedat}{4}
&\li L^\star z^\fh, L^\star w^\fh \ri + \li u^h, L^\star w^\fh \ri &&= \li f, w^\fh \ri, &&\quad \forall w^\fh \in \cZ^\fh,\\
&\li L^\star z^\fh, v^h \ri &&= 0, &&\quad\forall v^h \in \cU^h.
\end{alignedat}
\right.
\end{equation}

In summary, exchanging $(L_w L^\star)^{-1}$ for $(L_w L^\star)_\fh^{-1}$ is practically trading the minimization of the $L^2(\Omega)$ norm of the error in \eqref{eq:L2minconf} for computational feasibility. Namely, the resulting minimization problem \eqref{eq:minLLstarinv} can be solved numerically but does not necessarily provide the $L^2$-orthogonal projection of the exact solution onto $\cU^h$. In contrast, the standard $\cL\cL^\star$ method introduced in \cite{2001FOSLLstar} solves the $L^2$ minimization \eqref{eq:L2minconf} but for the special choice $\cU^h = L^\star(\cZ^\fh)$. That is, it trades the freedom of choosing a standard finite element space in the place of $\cU^h$ for computational feasibility. Moreover, the $\cL\cL^\star$ method uses the space $\cZ^\fh$ (more precisely, the space $L^\star(\cZ^\fh)$) to approximate the exact solution, $\hu$, whereas in the $(\cL\cL^\star)^{-1}$ method introduced above, $\cZ^\fh$ serves as an auxiliary space to approximate the operator $(L_w L^\star)^{-1}\col \cD'(L^\star) \to \cD(L^\star)$ by the operator $(L_w L^\star)_\fh^{-1}\col \cD'(L^\star) \to \cZ^\fh \subset \cD(L^\star)$. See \cref{sec:llstar,sec:numerical} for further and more detailed comparisons of the $(\cL\cL^\star)^{-1}$ and other $\cL\cL^\star$-type methods. The implications of approximating the minimization problem \eqref{eq:L2minconf} by \eqref{eq:minLLstarinv} are studied in \cref{ssec:LLstarinvanalys}.

\subsection{Linear algebra equations}
\label{ssec:linalg}

Here, the algebraic systems associated with \eqref{eq:LLstarinv} and \eqref{eq:LLstarinvsystem} are formulated. Let $\set{\phi^h_i}_{i=1}^N$ and $\set{\psi^\fh_i}_{i=1}^M$ be the bases for $\cU^h$ and $\cZ^\fh$, respectively. Define the matrices $\bL \in \bbR^{M\times N}$, $\bH \in \bbR^{M\times M}$, $\bMM \in \bbR^{N\times N}$ (the $L^2(\Omega)$ mass matrix on $\cU^h$), and the vector $\brff \in \bbR^M$ as
\begin{equation}\label{eq:matvecdefs}
(\bL)_{ij} = \li \phi^h_j, L^\star \psi^\fh_i \ri, \quad (\bH)_{ij} = \li L^\star \psi^\fh_j, L^\star \psi^\fh_i \ri, \quad (\bMM)_{ij} = \li \phi^h_j, \phi^h_i \ri, \quad (\brff)_i = \li f, \psi^\fh_i \ri.
\end{equation}

The functions in $\cU^h$ and $\cZ^\fh$ can be identified with their corresponding coefficient vectors with respect to the bases of the spaces. Namely, $u^h \in \cU^h$, $\bu \in \bbR^N$ and $z^\fh \in \cZ^\fh$, $\bz \in \bbR^M$ are identified with the expansions
\[
u^h = \sum_{i=1}^N (\bu)_i \phi^h_i,\quad z^\fh = \sum_{i=1}^M (\bz)_i \psi^\fh_i.
\]

Using this notation, the weak formulation \eqref{eq:LLstarinvsystem} induces the following algebraic system of equations with a symmetric block matrix $\bbA$:
\begin{equation}\label{eq:algblocksystem}
\bbA
\begin{bmatrix*}[c]
  \bz\, \\
  \bu
\end{bmatrix*}
=
\begin{bmatrix*}[l]
  \bH & \bL\, \\
  \bL^T &
\end{bmatrix*}
\begin{bmatrix*}[c]
  \bz\, \\
  \bu
\end{bmatrix*}
=
\begin{bmatrix*}[c]
  \brff\, \\
  \bzero
\end{bmatrix*}.
\end{equation}
Note that, owing to \eqref{asu:LstarPoincare}, $\bH$ is a symmetric positive definite (SPD) matrix. Eliminating $\bz$ in \eqref{eq:algblocksystem} results in the following algebraic system for the respective Schur complement:
\begin{equation}\label{eq:algsystem0}
\bL^T \bH^{-1} \bL \bu = \bL^T \bH^{-1} \brff.
\end{equation}
Denote $\bA = \bL^T \bH^{-1} \bL \in \bbR^{N\times N}$ and $\bff = \bL^T \bH^{-1} \brff \in \bbR^N$. Then \eqref{eq:algsystem0} becomes
\begin{equation}\label{eq:algsystem}
\bA \bu = \bff,
\end{equation}
which is precisely the algebraic system induced by the weak form \eqref{eq:LLstarinv}. Indeed, since the solution of \eqref{eq:LLstarweakh} (i.e., the effect of $(L_w L^\star)_\fh^{-1}$) is computed through the effect of $\bH^{-1}$, the matrix $\bA$ corresponds to the bilinear form and $\bff$ corresponds to the right-hand side in \eqref{eq:LLstarinv}. That is,
\[
(\bA)_{ij} = \li (L_w L^\star)_\fh^{-1} L \phi^h_j, L \phi^h_i \ri, \quad (\bff)_i = \li (L_w L^\star)_\fh^{-1} f, L \phi^h_i \ri.
\]
Clearly, $\bA$ is nonsingular if and only if the matrix $\bbA$ in \eqref{eq:algblocksystem} is nonsingular.

\subsection{Analysis}
\label{ssec:LLstarinvanalys}

In this subsection, the discrete $(\cL\cL^\star)^{-1}$ formulation is analyzed and studied in detail. The major result is the error estimate for the $(\cL\cL^\star)^{-1}$ method.

It is not difficult to see that the properties of $(L_w L^\star)^{-1}\col L^2(\Omega) \to \cD(L^\star)$ are to a certain extent maintained by $(L_w L^\star)_\fh^{-1}\col L^2(\Omega) \to \cZ^\fh$. Namely, the bilinear form $\li (L_w L^\star)_\fh^{-1} \cdot, \cdot \ri$ is continuous on $L^2(\Omega)$ and $(L_w L^\star)_\fh^{-1}$ is self-adjoint and positive semidefinite with respect to the $L^2(\Omega)$ inner product. Note, however, that $(L_w L^\star)_\fh^{-1}$ is not positive definite since it has a nontrivial (and infinite-dimensional) null space. This is to be expected since $(L_w L^\star)_\fh^{-1}$ maps an infinite-dimensional space to a finite-dimensional one. Indeed, from \eqref{eq:LLstarweakh}, it follows that
\[
\cN((L_w L^\star)_\fh^{-1}) = (\cZ^\fh)^\perp = \set{q \in L^2(\Omega) \where \li q, w^\fh \ri = 0 \text{ for all } w^\fh \in \cZ^\fh }
\]
is the null space of $(L_w L^\star)_\fh^{-1}$. Since $(L_w L^\star)_\fh^{-1}$ is singular, the matrix $\bA$ (or, equivalently, the matrix $\bbA$ in \eqref{eq:algblocksystem}) can be singular if the spaces $\cU^h$ and $\cZ^\fh$ are not selected carefully.

The analysis of the discrete $(\cL\cL^\star)^{-1}$ formulation and the properties of the matrix $\bA$ is fundamentally founded on the effect of replacing $(L_w L^\star)^{-1}$ with $(L_w L^\star)_\fh^{-1}$ on the characterization of the $L^2(\Omega)$ norm. \Cref{thm:L2char,thm:L2SAPD_LLstarinv} show that $(L_w L^\star)^{-1}$ together with $L_w$ exactly recover the $L^2(\Omega)$ norm on the entire space, which is related to the equality $L^\star (L_w L^\star)^{-1} L_w = I$, where $I\col L^2(\Omega) \to L^2(\Omega)$ is the identity operator. However, replacing $(L_w L^\star)^{-1}$ with $(L_w L^\star)_\fh^{-1}$ cannot fully recover the $L^2(\Omega)$ norm. The following result shows that, instead, the $L^2(\Omega)$ norm is exactly recovered only on a subspace,  $L^\star(\cZ^\fh)$, and $L^\star (L_w L^\star)^{-1}_\fh L_w$ becomes a $L^2$-orthogonal projection.

\begin{lemma}[$L^2$-orthogonal projection]\label{lem:L2proj}
Let $\Pi^\fh_\star\col L^2(\Omega) \to L^\star(\cZ^\fh)$ be the $L^2$-orthogonal projection onto $L^\star(\cZ^\fh)$, then $\Pi^\fh_\star = L^\star (L_w L^\star)_\fh^{-1} L_w$.
\end{lemma}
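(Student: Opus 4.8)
The plan is to verify the variational (Galerkin) characterization of the orthogonal projection directly for the composite operator $L^\star (L_w L^\star)_\fh^{-1} L_w$. Since $\cZ^\fh$ is finite dimensional, the subspace $L^\star(\cZ^\fh) \subset L^2(\Omega)$ is finite dimensional, hence closed, so the $L^2$-orthogonal projection $\Pi^\fh_\star$ is well defined. It is characterized by $\Pi^\fh_\star q \in L^\star(\cZ^\fh)$ together with the orthogonality condition $\li \Pi^\fh_\star q - q, s \ri = 0$ for all $s \in L^\star(\cZ^\fh)$. Because every $s \in L^\star(\cZ^\fh)$ is of the form $s = L^\star w^\fh$ for some $w^\fh \in \cZ^\fh$, this condition is equivalent to
\[
\li \Pi^\fh_\star q, L^\star w^\fh \ri = \li q, L^\star w^\fh \ri, \quad \forall w^\fh \in \cZ^\fh.
\]

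Next I would unwind the definitions of the composite operator. For $q \in L^2(\Omega)$, the definition of $L_w$ gives the functional $L_w q = \vartheta_q$ with $\vartheta_q(w) = \li q, L^\star w \ri$. Applying $(L_w L^\star)_\fh^{-1}$ to this functional and invoking \eqref{eq:LLstarweakhgen} with $\ell = \vartheta_q$, the element $z^\fh \ceq (L_w L^\star)_\fh^{-1} L_w q \in \cZ^\fh$ is the unique solution of
\[
\li L^\star z^\fh, L^\star w^\fh \ri = \vartheta_q(w^\fh) = \li q, L^\star w^\fh \ri, \quad \forall w^\fh \in \cZ^\fh,
\]
which is well posed because assumption \eqref{asu:LstarPoincare} makes $\li L^\star \cdot, L^\star \cdot \ri$ an inner product on $\cZ^\fh$, so $z^\fh$, and hence $L^\star z^\fh$, is uniquely determined. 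Setting $p \ceq L^\star z^\fh \in L^\star(\cZ^\fh)$, this is precisely the identity $\li p, L^\star w^\fh \ri = \li q, L^\star w^\fh \ri$ for all $w^\fh \in \cZ^\fh$. Note that one must use $\vartheta_q$, not $\cE q$, as the argument of $(L_w L^\star)_\fh^{-1}$: the relevant right-hand side is $\li q, L^\star w^\fh \ri$ rather than $\li q, w^\fh \ri$.

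Comparing the two displays, $p = L^\star (L_w L^\star)_\fh^{-1} L_w q$ lies in $L^\star(\cZ^\fh)$ and satisfies the exact variational condition defining $\Pi^\fh_\star q$. By uniqueness of the orthogonal projection, $L^\star (L_w L^\star)_\fh^{-1} L_w q = \Pi^\fh_\star q$ for every $q \in L^2(\Omega)$, which yields the claimed operator identity.

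I do not expect a genuine obstacle here: the argument is a direct matching of the discrete weak problem \eqref{eq:LLstarweakhgen} against the Galerkin orthogonality that characterizes $\Pi^\fh_\star$. The only points requiring (minor) care are the well-definedness issues -- that $L^\star(\cZ^\fh)$ is closed (automatic from finite dimensionality) so that $\Pi^\fh_\star$ exists, and that \eqref{asu:LstarPoincare} guarantees uniqueness of $z^\fh$ -- together with the observation that parametrizing the test functions as $s = L^\star w^\fh$ correctly exhausts $L^\star(\cZ^\fh)$.
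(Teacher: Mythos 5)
Your proposal is correct and follows essentially the same route as the paper's own proof: characterize $\Pi^\fh_\star q$ by the Galerkin orthogonality $\li \Pi^\fh_\star q, L^\star w^\fh \ri = \li q, L^\star w^\fh \ri$ for all $w^\fh \in \cZ^\fh$, then observe that $L^\star\bigl((L_w L^\star)_\fh^{-1} L_w q\bigr)$ satisfies exactly this condition by the definitions of $L_w$ and the discrete weak problem \eqref{eq:LLstarweakhgen}, and conclude by uniqueness. Your added remarks on well-definedness (closedness of the finite-dimensional subspace, unique solvability via \eqref{asu:LstarPoincare}) and on feeding $\vartheta_q$ rather than $\cE q$ into $(L_w L^\star)_\fh^{-1}$ are correct points of care that the paper leaves implicit.
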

\begin{proof}
Consider an arbitrary $q \in L^2(\Omega)$. Notice that $\Pi^\fh_\star q \in L^\star(\cZ^\fh)$ is characterized by the weak form
\begin{equation}\label{eq:projectionweak}
\li \Pi^\fh_\star q, L^\star w^\fh \ri = \li q, L^\star w^\fh \ri,\quad \forall w^\fh \in \cZ^\fh.
\end{equation}
Let $\hz^\fh = (L_w L^\star)_\fh^{-1} L_w q \in \cZ^\fh$. The definitions of $(L_w L^\star)_\fh^{-1}$ and $L_w$ imply
\[
\li L^\star \hz^\fh, L^\star w^\fh \ri = \li q, L^\star w^\fh \ri,\quad \forall w^\fh \in \cZ^\fh.
\]
Thus, $\Pi^\fh_\star q = L^\star \hz^\fh$ and, hence, $\Pi^\fh_\star q = L^\star (L_w L^\star)_\fh^{-1} L_w q$.
\end{proof}

\Cref{lem:L2proj} is important for the coming considerations and results. Particularly, it contributes to easily establishing the following basic and useful properties of the matrix $\bA$, including an abstract characterization of its null space and a necessary condition for its nonsingularity. They assist the argumentation and motivation of the results below.

\begin{proposition}\label{prop:Aprop}
The following properties hold:
\begin{enumerate}[label=(\roman*),nolistsep]
\item\label{cor:L2proj} The matrix $\bA$ and the vector $\bff$ satisfy
\[
(\bA)_{ij} = \li \Pi^\fh_\star \phi^h_j, \phi^h_i \ri, \quad (\bff)_i = \li \Pi^\fh_\star \hu, \phi^h_i \ri,
\]
and the discrete $(\cL\cL^\star)^{-1}$ formulation in \eqref{eq:minLLstarinv} and \eqref{eq:LLstarinv} can be equivalently expressed as
\begin{gather}
u^h = \argmin_{v^h \in \cU^h} \lV \Pi^\fh_\star (v^h - \hu) \rV^2,\label{eq:minLLstarinvproj}\\
\text{Find } u^h \in \cU^h\col \li \Pi^\fh_\star u^h, v^h \ri = \li \Pi^\fh_\star \hu, v^h \ri,\quad\forall v^h \in \cU^h.\label{eq:LLstarinvproj}
\end{gather}
\item\label{cor:Aspsd} The matrix $\bA$ in \eqref{eq:algsystem} is symmetric positive semidefinite, for all choices of $\cU^h$ and $\cZ^\fh$.
\item\label{lem:Anull} By identifying the vectors in $\bbR^N$ with the functions in $\cU^h$, the null space of $\bA$ is characterized as $\cN(\bA) = \cU^h \cap \lb L^\star(\cZ^\fh) \rb^\perp$.
\item\label{cor:ALnull} The null spaces of $\bA$ and $\bL$ coincide. That is, $\cN(\bA) = \cN(\bL)$.
\item\label{cor:Asingular} If $\dim(\cU^h) > \dim(\cZ^\fh)$ (i.e., $N > M$), then $\bA$ (as well as $\bbA$) is singular.
\end{enumerate}
\end{proposition}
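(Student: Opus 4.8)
The plan is to establish the identity in part (i) first, since every other claim follows from it by elementary linear algebra. The core computation is to rewrite the entries $(\bA)_{ij} = \li (L_w L^\star)_\fh^{-1} L \phi^h_j, L \phi^h_i \ri$ in terms of the projection $\Pi^\fh_\star$ from \cref{lem:L2proj}. First I would set $z^\fh = (L_w L^\star)_\fh^{-1} L\phi^h_j \in \cZ^\fh$, which by \eqref{eq:LLstarweakh} satisfies $\li L^\star z^\fh, L^\star w^\fh\ri = \li L\phi^h_j, w^\fh\ri$ for all $w^\fh\in\cZ^\fh$. Since $\cU^h\subset\cD(L)$ and $\cZ^\fh\subset\cD(L^\star)$, the adjoint relation gives $\li z^\fh, L\phi^h_i\ri = \li L^\star z^\fh, \phi^h_i\ri$, so that $(\bA)_{ij} = \li L^\star z^\fh, \phi^h_i\ri$. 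The remaining point is to identify $L^\star z^\fh$ with $\Pi^\fh_\star\phi^h_j$: because $\phi^h_j\in\cD(L)$, the operator $L_w$ coincides with $\cE L$, whence $\Pi^\fh_\star\phi^h_j = L^\star (L_w L^\star)_\fh^{-1} L_w\phi^h_j = L^\star (L_w L^\star)_\fh^{-1} L\phi^h_j = L^\star z^\fh$ by \cref{lem:L2proj}. This yields $(\bA)_{ij} = \li \Pi^\fh_\star\phi^h_j, \phi^h_i\ri$. The formula for $\bff$ is obtained by the same manipulation applied to $f = L\hu$, using $L_w\hu = \cE L\hu = \cE f$.

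Having these representations, I would deduce the equivalence of the formulations. For the weak form, the same adjoint computation turns the left- and right-hand sides of \eqref{eq:LLstarinv} into $\li \Pi^\fh_\star u^h, v^h\ri$ and $\li \Pi^\fh_\star\hu, v^h\ri$, giving \eqref{eq:LLstarinvproj}. For the minimization, writing $L v^h - f = L(v^h - \hu)$ with $v^h-\hu\in\cD(L)$ and repeating the argument shows $\li (L_w L^\star)_\fh^{-1} L(v^h-\hu), L(v^h-\hu)\ri = \li \Pi^\fh_\star(v^h-\hu), v^h-\hu\ri = \lV \Pi^\fh_\star(v^h-\hu)\rV^2$, where the last step uses that $\Pi^\fh_\star$ is an $L^2$-orthogonal projection (hence self-adjoint and idempotent); this is exactly \eqref{eq:minLLstarinvproj}.

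The remaining parts are then short. For (ii), symmetry and positive semidefiniteness follow at once from $\bA = \bL^T\bH^{-1}\bL$ with $\bH$ SPD, since $\bu^T\bA\bu = (\bL\bu)^T\bH^{-1}(\bL\bu)\ge 0$; alternatively $\bu^T\bA\bu = \lV\Pi^\fh_\star u^h\rV^2\ge 0$ by part (i). For (iv) I would note that $\bH^{-1}$ being SPD forces $\bu^T\bA\bu = 0$ to be equivalent to $\bL\bu = 0$, and since $\bA\bu = \bL^T\bH^{-1}\bL\bu$ this gives $\cN(\bA) = \cN(\bL)$. For (iii), combining $\bu^T\bA\bu = \lV\Pi^\fh_\star u^h\rV^2$ with the semidefiniteness shows $\bu\in\cN(\bA)$ if and only if $\Pi^\fh_\star u^h = 0$, i.e. if and only if $u^h\in[L^\star(\cZ^\fh)]^\perp$; intersecting with $\cU^h$ gives $\cN(\bA) = \cU^h\cap[L^\star(\cZ^\fh)]^\perp$. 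Finally, for (v), if $N>M$ then $\bL\in\bbR^{M\times N}$ has $\rank(\bL)\le M<N$, so $\cN(\bL)=\cN(\bA)$ is nontrivial and $\bA$ is singular; the singularity of $\bbA$ then follows from the equivalence already noted below \eqref{eq:algsystem}.

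The main obstacle is purely bookkeeping in part (i): one must keep track of the embedding $\cE$ and of the fact that $(L_w L^\star)_\fh^{-1}$ is here applied to elements of $L^2(\Omega)$ rather than to general functionals, and carefully justify the interchange $\li z^\fh, L\phi^h_i\ri = \li L^\star z^\fh, \phi^h_i\ri$ using $\cU^h\subset\cD(L)$ and $\cZ^\fh\subset\cD(L^\star)$. Once the identification $L^\star(L_w L^\star)_\fh^{-1} L = \Pi^\fh_\star$ on $\cU^h$ is in hand via \cref{lem:L2proj}, the rest is routine.
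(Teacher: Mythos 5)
Your proof is correct and takes essentially the same route as the paper: establish the representation in part (i) from \cref{lem:L2proj}, using $f = L\hu$ and the fact that $L_w$ coincides with $\cE L$ on $\cD(L)$, and then obtain (ii)--(v) as direct consequences. The only cosmetic difference is that your arguments for (ii) and (iv) lean directly on the factorization $\bA = \bL^T\bH^{-1}\bL$ with $\bH$ SPD, whereas the paper routes both through the projection identity of (i) and the null-space characterization (iii); the two are equally valid one-line arguments.
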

\begin{proof}
\vspace{-2ex}
\begin{enumerate}[label=(\roman*),nolistsep]
\item It follows from \cref{lem:L2proj}, using the equality $f = L \hu$ and that $L_w$ and $\cE L$ coincide on $\cD(L)$.
\item This is an immediate consequence of \ref{cor:L2proj}.
\item Let $v^h \in \cU^h$ be a finite element function with a coefficient vector $\bv \in \bbR^N$. By \ref{cor:L2proj}, $\bA\bv = \bzero$ holds if and only if $\Pi^\fh_\star v^h \perp \Pi^\fh_\star(\cU^h)$. The last is equivalent to $\Pi^\fh_\star v^h = 0$ and $v^h \perp L^\star(\cZ^\fh)$.
\item Let $v^h \in \cU^h$ be a finite element function with a coefficient vector $\bv \in \bbR^N$. Then
\[
(\bL \bv)_i = \li v^h, L^\star \psi^\fh_i \ri, \quad \text{for } i = 1, \dots, M,
\]
implies that $\bv \in \cN(\bL)$ if and only if $v^h \in [ L^\star(\cZ^\fh) ]^\perp$. Thus, owing to \ref{lem:Anull}, $\cN(\bA) = \cN(\bL)$.
\item Notice that \ref{lem:Anull} implies that $\bA$ is always singular if $L^\star(\cZ^\fh) \subsetneq \cU^h$. More generally, $\bA$ is guaranteed to be singular if $\cU^h$ is of higher dimension than $\cZ^\fh$. Indeed, if $N > M$, there exists $\bv \in \bbR^N \setminus \{ \bzero \}$ such that $\bv \in \cN(\bL)$. Thus, by \ref{cor:ALnull}, $\bv \in \cN(\bA)$ and, hence, $\cN(\bA) \ne \{ \bzero \}$.\qedhere
\end{enumerate}
\end{proof}

As shown in \cite{2001FOSLLstar}, discussed later in \cref{sec:llstar}, and evident from \cref{lem:L2proj}, the result of $\Pi^\fh_\star \hu$ is computable through an application of $(L_w L^\star)_\fh^{-1}$ (i.e., by solving \eqref{eq:LLstarweakhgen}). This is a feature provided by the standard $\cL\cL^\star$ method. In particular, the $\cL\cL^\star$ method of \cite{2001FOSLLstar} approximates the exact solution, $\hu$, by $\Pi^\fh_\star \hu$. This justifies why formulations like \eqref{eq:minLLstarinvproj} and \eqref{eq:LLstarinvproj} are computationally feasible. \Cref{prop:Aprop}\ref{cor:L2proj} is rather useful and interesting. It explains the effect on \eqref{eq:L2minconf} and \eqref{eq:L2minconfweak} when $(L_w L^\star)^{-1}$ is replaced by $(L_w L^\star)^{-1}_\fh$. Namely, the infeasible $L^2$-norm minimization of the error becomes a feasible, due to the standard $\cL\cL^\star$ formulation, minimization of the projection of the error. This is, generally, a semi-norm minimization that only partially represents the $L^2(\Omega)$ norm, due to the necessary discretization of the operator $(L_w L^\star)^{-1}$. Furthermore, \cref{prop:Aprop}\ref{cor:L2proj} contributes to a considerable simplification of the proofs and considerations below.

Note that \cref{prop:Aprop}\ref{cor:Asingular} establishes a necessary condition ($\dim(\cU^h) \le \dim(\cZ^\fh)$) for the invertibility of $\bA$. A sufficient condition is more delicate. \Cref{prop:Aprop}\ref{lem:Anull} suggests that the spaces $\cU^h$ and $L^\star(\cZ^\fh)$ should be ``close'' in a certain sense. This is made precise by the ``inf-sup'' condition below, which can be interpreted as a condition on the cosine of the abstract angle between the spaces $\cU^h$ and $L^\star(\cZ^\fh)$. Moreover, it implies a discrete (i.e., on $\cU^h$) $L^2$-coercivity that is a stronger result than the nonsingularity of $\bA$ and, in particular, provides information on the conditioning of $\bA$. That is, even though the $L^2(\Omega)$ norm is only partially recovered, i.e., only on $L^\star(\cZ^\fh)$, by the projection operator, the ``closeness'' of $\cU^h$ and $L^\star(\cZ^\fh)$ provided by the inf-sup condition implies a discrete ``control'' of the $L^2(\Omega)$ norm on $\cU^h$.

\begin{theorem}[inf-sup condition]\label{thm:infsup}
If there exists a constant $c_I > 0$ such that
\begin{equation}\label{eq:infsup}
\inf_{v^h \in \cU^h} \sup_{w^\fh \in \cZ^\fh} \frac{\lv \li v^h, L^\star w^\fh \ri \rv}{\lV v^h \rV \lV L^\star w^\fh \rV} \ge c_I,
\end{equation}
then the following spectral estimate holds:
\begin{equation}\label{eq:AoverM}
c_I^2\; \bv^T \bMM \bv \le \bv^T \bA \bv, \quad\forall\bv\in\bbR^N.
\end{equation}
In particular, $\bA$ (as well as $\bbA$) is nonsingular.
\end{theorem}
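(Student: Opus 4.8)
The plan is to translate the matrix inequality \eqref{eq:AoverM} into an equivalent statement about finite element functions and then read it directly from the inf-sup hypothesis \eqref{eq:infsup}. Writing $v^h = \sum_i (\bv)_i \phi^h_i$, the definition of the mass matrix gives $\bv^T \bMM \bv = \lV v^h \rV^2$. For the right-hand side I would invoke \cref{prop:Aprop}\ref{cor:L2proj}, which yields $(\bA)_{ij} = \li \Pi^\fh_\star \phi^h_j, \phi^h_i \ri$ and hence $\bv^T \bA \bv = \li \Pi^\fh_\star v^h, v^h \ri$. Since $\Pi^\fh_\star$ is an $L^2$-orthogonal projection onto $L^\star(\cZ^\fh)$, the residual $v^h - \Pi^\fh_\star v^h$ is orthogonal to $\Pi^\fh_\star v^h \in L^\star(\cZ^\fh)$, so $\bv^T \bA \bv = \lV \Pi^\fh_\star v^h \rV^2$. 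With these two identities, \eqref{eq:AoverM} is equivalent to the functional bound $c_I \lV v^h \rV \le \lV \Pi^\fh_\star v^h \rV$ for every $v^h \in \cU^h$.

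The second step is to show that the inner supremum in \eqref{eq:infsup} equals $\lV \Pi^\fh_\star v^h \rV / \lV v^h \rV$. The key observation is that the numerator only sees the projection: because $v^h - \Pi^\fh_\star v^h \perp L^\star(\cZ^\fh)$ and $L^\star w^\fh \in L^\star(\cZ^\fh)$, one has $\li v^h, L^\star w^\fh \ri = \li \Pi^\fh_\star v^h, L^\star w^\fh \ri$. By Cauchy--Schwarz the quotient $\lv \li v^h, L^\star w^\fh \ri \rv / \lV L^\star w^\fh \rV$ is then bounded above by $\lV \Pi^\fh_\star v^h \rV$ for every $w^\fh$, and since $\Pi^\fh_\star v^h$ itself lies in $L^\star(\cZ^\fh)$ it equals $L^\star w^\fh$ for some admissible $w^\fh$, so the bound is attained. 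Dividing by $\lV v^h \rV$ identifies the supremum with $\lV \Pi^\fh_\star v^h \rV / \lV v^h \rV$.

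Combining this identity with the hypothesis \eqref{eq:infsup} gives $c_I \lV v^h \rV \le \lV \Pi^\fh_\star v^h \rV$ for all $v^h \in \cU^h$, which is exactly \eqref{eq:AoverM}. For nonsingularity, if $\bv^T \bA \bv = 0$ then \eqref{eq:AoverM} forces $\lV v^h \rV = 0$, whence $\bv = \bzero$ by the linear independence of the basis; thus $\bA$ is strictly positive definite and nonsingular, and by the equivalence recorded below \eqref{eq:algsystem} the block matrix $\bbA$ is nonsingular as well.

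I do not anticipate a serious obstacle: the argument is essentially bookkeeping once \cref{lem:L2proj} and \cref{prop:Aprop}\ref{cor:L2proj} are available. The one point requiring care is the identification of the supremum with $\lV \Pi^\fh_\star v^h \rV$, where one must use both the orthogonal decomposition (to discard the component of $v^h$ outside $L^\star(\cZ^\fh)$) and the surjectivity of $w^\fh \mapsto L^\star w^\fh$ onto $L^\star(\cZ^\fh)$ (to see that the Cauchy--Schwarz bound is sharp). This is precisely where the interpretation of the inf-sup quantity as the cosine of the abstract angle between $\cU^h$ and $L^\star(\cZ^\fh)$ becomes transparent.
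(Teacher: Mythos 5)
Your proposal is correct and follows essentially the same route as the paper's proof: both reduce the claim to the identity $\bv^T\bA\bv = \lV \Pi^\fh_\star v^h \rV^2$ via \cref{prop:Aprop}\ref{cor:L2proj}, and both identify the inner supremum in \eqref{eq:infsup} with $\lV \Pi^\fh_\star v^h \rV$ by using \eqref{eq:projectionweak} together with Cauchy--Schwarz and attainment at $L^\star w^\fh = \Pi^\fh_\star v^h$. The only differences are organizational: you spell out the attainment step and the nonsingularity conclusion explicitly, which the paper leaves implicit.
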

\begin{proof}
Consider a finite element function $v^h \in \cU^h$ and its corresponding coefficient vector $\bv \in \bbR^N$. Then, owing to \eqref{eq:infsup}, \eqref{eq:projectionweak}, and \cref{prop:Aprop}\ref{cor:L2proj}, it follows
\begin{align*}
c_I^2\; \bv^T \bMM \bv &= c_I^2 \lV v^h \rV^2 \le \lb \sup_{w^\fh \in \cZ^\fh} \frac{\lv \li v^h, L^\star w^\fh \ri \rv}{\lV L^\star w^\fh \rV} \rb^2 = \lb \sup_{w^\fh \in \cZ^\fh} \frac{\lv \li \Pi^\fh_\star v^h, L^\star w^\fh \ri \rv}{\lV L^\star w^\fh \rV} \rb^2\\
&= \lV \Pi^\fh_\star v^h \rV^2 = \li \Pi^\fh_\star v^h, \Pi^\fh_\star v^h \ri = \li \Pi^\fh_\star v^h, v^h \ri = \bv^T \bA \bv.\qedhere
\end{align*}
\end{proof}

Almost the same argument can be used to show that if $\lambda_{\min}$ is the smallest eigenvalue of the generalized eigenvalue problem $\bA \bv = \lambda \bMM \bv$, then
\[
\sqrt{\lambda_{\min}} = \inf_{v^h \in \cU^h} \sup_{w^\fh \in \cZ^\fh} \frac{\lv \li v^h, L^\star w^\fh \ri \rv}{\lV v^h \rV \lV L^\star w^\fh \rV}.
\]
Thus, \eqref{eq:AoverM} holds if and only if \eqref{eq:infsup} holds. Also, $\bA$ (as well as $\bbA$) is nonsingular if and only if
\[
\inf_{v^h \in \cU^h} \sup_{w^\fh \in \cZ^\fh} \frac{\lv \li v^h, L^\star w^\fh \ri \rv}{\lV v^h \rV \lV L^\star w^\fh \rV} > 0.
\]

The spectral inequality that is the reverse of \eqref{eq:AoverM} is easy to show, without requiring \eqref{eq:infsup}, since it is a consequence of the basic properties of the orthogonal projection. This and \eqref{eq:AoverM} are combined to obtain the spectral equivalence between $\bA$ and $\bMM$
\begin{equation}\label{eq:specequiv}
c_I^2\; \bv^T \bMM \bv \le \bv^T \bA \bv \le \bv^T \bMM \bv, \quad\forall\bv\in\bbR^N,
\end{equation}
which can be equivalently expressed in the following ways:
\begin{gather}
c_I^2 \lV v^h \rV^2 \le \li (L_w L^\star)_\fh^{-1} L v^h, L v^h \ri \le \lV v^h \rV^2, \quad\forall v^h \in \cU^h,\nn\\
c_I^2 \lV v^h \rV^2 \le \lV \Pi^\fh_\star v^h \rV^2 \le \lV v^h \rV^2, \quad\forall v^h \in \cU^h.\label{eq:discrL2equiv}
\end{gather}

As it can be expected, \eqref{eq:infsup} allows us to derive an important error estimate, which is the main result in this section. Indeed, while the operator $(L_w L^\star)_\fh^{-1}$ recovers the $L^2(\Omega)$ norm only partially, in the sense that only a projection is obtained in \cref{lem:L2proj} and not the identity operator, and it is clear from \cref{prop:Aprop}\ref{cor:L2proj} that a uniform (i.e., on the entire $L^2(\Omega)$) $L^2$-coercivity cannot hold, the discrete (on $\cU^h$) control of the $L^2(\Omega)$ norm that is provided by \eqref{eq:infsup} is sufficient for obtaining optimal convergence rates with respect to the $L^2(\Omega)$ norm. This is the content of the following abstract lemma which provides the analytical foundation for the error estimate below regarding the $(\cL\cL^*)^\mone$ method. It is a particular extension of C\'ea's lemma (see, e.g, \cite{BrennerFEM,ErnFEM}) for formulations with symmetric bilinear forms. No proof is provided since the result can be viewed as a specific adaptation of the general considerations in \cite{1971ErrBounds} and can be easily shown by a standard argument from the finite element analysis of so called ``variational crimes'' \cite[Chapter 10]{BrennerFEM}; see \cite{PhDthesis} for a proof, see also \cite{2010DPGTransport,BoffiMFE}.

\begin{lemma}\label{lem:likeCea}
Consider a real Hilbert space $\cH$ with a norm $\lV \cdot \rV_\cH$, a symmetric positive semidefinite bilinear form $a\col \cH \times \cH \to \bbR$, and a closed subspace $\cV^h \subset \cH$. Let $a$ satisfy, for some constants $\alpha,\beta > 0$,
\[
\alpha \lV \chi^h \rV_\cH \le a(\chi^h, \chi^h)^\halff, \quad a(w, w)^\halff \le \beta \lV w \rV_\cH,\quad \forall \chi^h \in \cV^h, w \in \cH.
\]
If $v^h \in \cV^h$ and $\hv \in \cH$ satisfy the ``orthogonality'' relation
\begin{equation}\label{eq:Galerkinorth}
a(v^h - \hv, \chi^h) = 0,\quad \forall \chi^h\in\cV^h,
\end{equation}
then the following (quasi-)optimal error estimate holds:
\[
\lV v^h - \hv \rV_\cH \le \lp 1 + \frac{\beta}{\alpha} \rp \inf_{\chi^h \in \cV^h} \lV \chi^h - \hv \rV_\cH.
\]
\end{lemma}

The important $L^2$-norm error estimate for the $(\cL\cL^\star)^{-1}$ method can be derived now. The argument counts on the discrete $L^2$-coercivity of the bilinear form in \eqref{eq:LLstarinvproj} given by $\eqref{eq:infsup}$, its natural $L^2$-continuity (see \eqref{eq:discrL2equiv}), and \cref{lem:likeCea} to show a (quasi-)optimal error estimate for the $(\cL\cL^\star)^{-1}$ method in the $L^2(\Omega)$ norm.

\begin{theorem}[error estimate]\label{thm:errest}
Assume that \eqref{eq:infsup} holds. If $u^h \in \cU^h$ is the approximation obtained by the $(\cL\cL^\star)^{-1}$ method (i.e., the solution to any of \eqref{eq:minLLstarinv}, \eqref{eq:LLstarinv}, \eqref{eq:LLstarinvsystem}, \eqref{eq:minLLstarinvproj}, or \eqref{eq:LLstarinvproj} obtained, e.g., by solving any of the linear systems \eqref{eq:algblocksystem} or \eqref{eq:algsystem}) and $\hu$ is the exact solution of \eqref{eq:opeq}, then
\[
\lV u^h - \hu \rV \le \lp 1 + \frac{1}{c_I} \rp \inf_{v^h \in \cU^h} \lV v^h - \hu \rV.
\]
\end{theorem}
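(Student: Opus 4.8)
The plan is to recast the $(\cL\cL^\star)^{-1}$ method in its projected form \eqref{eq:LLstarinvproj} and then invoke the abstract Céa-type estimate of \cref{lem:likeCea}. First I would set $\cH = L^2(\Omega)$ with the norm $\lV \cdot \rV$, take the closed subspace $\cV^h = \cU^h$, and introduce the bilinear form $a(p, q) = \li \Pi^\fh_\star p, q \ri$. By \cref{prop:Aprop}\ref{cor:L2proj}, this $a$ is exactly the form underlying the weak problem \eqref{eq:LLstarinvproj}. Since $\Pi^\fh_\star$ is an $L^2$-orthogonal projection by \cref{lem:L2proj} (hence a self-adjoint idempotent), one has $a(p, q) = \li \Pi^\fh_\star p, \Pi^\fh_\star q \ri$, so $a$ is symmetric and positive semidefinite, matching the hypotheses of \cref{lem:likeCea}.

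Next I would verify the two bounds required by \cref{lem:likeCea}. Continuity is immediate from the contractivity of an orthogonal projection: $a(w, w) = \lV \Pi^\fh_\star w \rV^2 \le \lV w \rV^2$ for all $w \in L^2(\Omega)$, giving $\beta = 1$ on all of $\cH$. The discrete coercivity on $\cU^h$ is precisely the lower bound in the spectral equivalence \eqref{eq:discrL2equiv}, which the inf-sup hypothesis \eqref{eq:infsup} supplies through \cref{thm:infsup}, namely $c_I^2 \lV v^h \rV^2 \le \lV \Pi^\fh_\star v^h \rV^2 = a(v^h, v^h)$ for all $v^h \in \cU^h$, giving $\alpha = c_I$. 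It then remains to check the orthogonality relation \eqref{eq:Galerkinorth}: taking $\hv = \hu \in \cD(L) \subset L^2(\Omega) = \cH$ and the discrete solution $u^h$, the weak form \eqref{eq:LLstarinvproj} reads $\li \Pi^\fh_\star u^h, \chi^h \ri = \li \Pi^\fh_\star \hu, \chi^h \ri$ for all $\chi^h \in \cU^h$, i.e., $a(u^h - \hu, \chi^h) = 0$ for all $\chi^h \in \cV^h$. Then \cref{lem:likeCea} yields $\lV u^h - \hu \rV \le \lp 1 + \beta/\alpha \rp \inf_{v^h \in \cU^h} \lV v^h - \hu \rV = \lp 1 + 1/c_I \rp \inf_{v^h \in \cU^h} \lV v^h - \hu \rV$, which is the claim.

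Because the heavy lifting is already carried out by \cref{lem:L2proj,thm:infsup} and the equivalence \eqref{eq:discrL2equiv}, no step here is genuinely hard; the estimate follows almost directly once the method is written in the projected form. The only point that needs care is the bookkeeping that identifies the method's bilinear form with $\li \Pi^\fh_\star \cdot, \cdot \ri$ and confirms its symmetry and semidefiniteness, so that the symmetric hypotheses of \cref{lem:likeCea} genuinely apply; this is exactly where \cref{prop:Aprop}\ref{cor:L2proj} together with the self-adjointness and idempotency of $\Pi^\fh_\star$ are used.
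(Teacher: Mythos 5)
Your proposal is correct and follows essentially the same route as the paper's own proof: both work with the bilinear form $\li \Pi^\fh_\star \cdot, \cdot \ri$ extended to all of $L^2(\Omega)$, obtain continuity with constant $1$ from the orthogonal-projection property, get discrete coercivity with constant $c_I$ from \eqref{eq:discrL2equiv} (i.e., from \eqref{eq:infsup} via \cref{thm:infsup}), read off the Galerkin orthogonality from \eqref{eq:LLstarinvproj}, and conclude with \cref{lem:likeCea}. The only cosmetic difference is that you verify the symmetry and semidefiniteness of the form explicitly via $\li \Pi^\fh_\star p, q \ri = \li \Pi^\fh_\star p, \Pi^\fh_\star q \ri$, which the paper leaves implicit.
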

\begin{proof}
In view of \eqref{eq:LLstarinv} and \eqref{eq:LLstarinvproj}, the bilinear forms of interest here are $\li \Pi^\fh_\star \cdot, \cdot \ri$ and $\li (L_w L^\star)_\fh^{-1} L \cdot, L \cdot \ri$. Owing to \cref{lem:L2proj}, they coincide when they are both defined, i.e., on $\cD(L)$. However, the bilinear form $\li \Pi^\fh_\star \cdot, \cdot \ri$ is clearly well-defined on $L^2(\Omega)\times L^2(\Omega)$ and it is the one that is useful for this proof. Further information on extending the $(\cL\cL^\star)^{-1}$ formulation is provided in \cref{sec:extension}.

First, since $\Pi^\fh_\star$ is an orthogonal projection, it holds that
\[
\li \Pi^\fh_\star q , q \ri^\halff \le \lV q \rV, \quad \forall q \in L^2(\Omega),
\]
i.e., $\li \Pi^\fh_\star \cdot, \cdot \ri$ is continuous on $L^2(\Omega)$. The left inequality in \eqref{eq:discrL2equiv} can be written as
\[
c_I \lV v^h \rV \le \li \Pi^\fh_\star v^h, v^h \ri^\halff,\quad\forall v^h\in\cU^h,
\]
which shows that $\li \Pi^\fh_\star \cdot, \cdot \ri$ is $L^2$-coercive on the discrete space $\cU^h$. Next, \eqref{eq:LLstarinvproj} implies the orthogonality property
\[
\li \Pi^\fh_\star (u^h - \hu), v^h \ri = 0,\quad\forall v^h \in \cU^h.
\]
Thus, the error estimate follows from \cref{lem:likeCea}.
\end{proof}

\begin{remark}
Notice that the argument in \cref{thm:errest} only needs \eqref{eq:infsup} and $\cU^h \subset L^2(\Omega)$. No other particular assumptions on $\cU^h$ are necessary as long as a general $(\cL\cL^\star)^{-1}$ formulation like \eqref{eq:minLLstarinvproj} and \eqref{eq:LLstarinvproj} is used; see \cref{sec:extension}.
\end{remark}

In general, all observations above also hold when $c_I$ depends on the mesh parameter, $h$, instead of being a constant. In such a case, according to the estimate in \cref{thm:errest}, an $h$-dependence of $c_I$ takes away from the convergence order that is implied by the approximation properties of $\cU^h$. Also, this would affect the spectral equivalence estimate \eqref{eq:specequiv}.

Obtaining inf-sup conditions of the form \eqref{eq:infsup} for common finite element spaces is nontrivial. However, for the special choice of $\cU^h = L^\star(\cZ^\fh)$, it is easy to see that \eqref{eq:infsup} holds with $c_I = 1$. In this case, $\bA = \bMM$ and \eqref{eq:minLLstarinvproj} reduces to $u^h = \Pi^\fh_\star \hu$. That is, the $(\cL\cL^\star)^{-1}$ method coincides with the standard $\cL\cL^\star$ method when $\cU^h = L^\star(\cZ^\fh)$. See \cref{sec:llstar} for a further discussion on the relation of the $(\cL\cL^\star)^{-1}$ method to other $\cL\cL^\star$-type methods. In general, it is reasonable to expect that, for any fixed $\cU^h$ (i.e., $h$ is fixed), the corresponding approximation of $(L_w L^\star)^{-1}$ by $(L_w L^\star)_\fh^{-1}$ becomes better as $\fh \to 0$, in the sense that the representation of the $L^2(\Omega)$ norm on $\cU^h$ improves. This is demonstrated below by showing, under mild assumptions on the approximation properties of $L^\star(\cZ^\fh)$, that $c_I \to 1$ in \eqref{eq:infsup} as $\fh \to 0$ and the $(\cL\cL^*)^\mone$ solution approaches the $L^2$-orthogonal projection of $\hu$ onto $\cU^h$. That is, as $\fh \to 0$, the abstract angle between the spaces $\cU^h$ and $L^\star(\cZ^\fh)$ vanishes and the computational representation of the $L^2(\Omega)$ norm on $\cU^h$ becomes closer to being exact, since it is exact on $L^\star(\cZ^\fh)$. Furthermore, it is shown, under stronger assumptions on the approximation properties of $L^\star(\cZ^\fh)$, that \eqref{eq:infsup} can be maintained uniformly with $c_I$ arbitrarily close to $1$ by taking the ratio $h/\fh$ sufficiently large and keeping it fixed. This is a very basic study of how suitable approximation properties can provide inf-sup stability by appropriately selecting the configuration of spaces. These considerations need the following proposition. It shows that the inf-sup condition \eqref{eq:infsup} can be equivalently expressed as a ``sup-inf'' condition. This can be interpreted as a condition on the sine of the abstract angle between the spaces $\cU^h$ and $L^\star(\cZ^\fh)$.

\begin{proposition}\label{prop:supinf}
The inf-sup condition \eqref{eq:infsup} is equivalent to
\begin{equation}\label{eq:supinf}
\sup_{v^h \in \cU^h}\frac{\lV v^h - \Pi^\fh_\star v^h \rV}{\lV v^h \rV} \le \sqrt{1 - c_I^2}.
\end{equation}
\end{proposition}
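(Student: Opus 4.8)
The plan is to reduce both quantities to the single ratio $\lV \Pi^\fh_\star v^h \rV / \lV v^h \rV$ and then relate them by the Pythagorean theorem. First I would observe that the inner supremum in \eqref{eq:infsup} collapses to the norm of a projection. As $w^\fh$ ranges over $\cZ^\fh$, the element $L^\star w^\fh$ ranges over all of $L^\star(\cZ^\fh)$, and since $\Pi^\fh_\star$ is the $L^2$-orthogonal projection onto this subspace, $\li v^h, L^\star w^\fh \ri = \li \Pi^\fh_\star v^h, L^\star w^\fh \ri$. By Cauchy--Schwarz, with equality attained when $L^\star w^\fh$ is parallel to $\Pi^\fh_\star v^h$, this yields
\[
\sup_{w^\fh \in \cZ^\fh} \frac{\lv \li v^h, L^\star w^\fh \ri \rv}{\lV L^\star w^\fh \rV} = \lV \Pi^\fh_\star v^h \rV,
\]
which is precisely the identity already used in the proof of \cref{thm:infsup}. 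Consequently, \eqref{eq:infsup} is equivalent to the pointwise bound $\lV \Pi^\fh_\star v^h \rV \ge c_I \lV v^h \rV$ for all $v^h \in \cU^h$.

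Next I would exploit that $\Pi^\fh_\star$ is an orthogonal projection, so $v^h - \Pi^\fh_\star v^h \perp \Pi^\fh_\star v^h$ and the Pythagorean identity gives
\[
\lV v^h \rV^2 = \lV \Pi^\fh_\star v^h \rV^2 + \lV v^h - \Pi^\fh_\star v^h \rV^2.
\]
Dividing by $\lV v^h \rV^2$ (nonzero by the standing convention $v^h \ne 0$ in the supremum) produces
\[
\frac{\lV v^h - \Pi^\fh_\star v^h \rV^2}{\lV v^h \rV^2} = 1 - \frac{\lV \Pi^\fh_\star v^h \rV^2}{\lV v^h \rV^2}.
\]

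Finally, combining the two steps, the bound $\lV \Pi^\fh_\star v^h \rV \ge c_I \lV v^h \rV$ holds for every $v^h \in \cU^h$ if and only if $\lV \Pi^\fh_\star v^h \rV^2 / \lV v^h \rV^2 \ge c_I^2$ for every such $v^h$, which by the displayed identity is equivalent to $\lV v^h - \Pi^\fh_\star v^h \rV^2 / \lV v^h \rV^2 \le 1 - c_I^2$, i.e., to \eqref{eq:supinf} after taking the supremum over $v^h$ and a square root. Since each manipulation is a genuine equivalence, this proves the statement in both directions. I do not expect any real obstacle here: the only nonroutine ingredient is the projection identity for the inner supremum, which is already available from \cref{thm:infsup}, and the rest is Pythagoras together with the elementary algebraic passage from the ``cosine'' bound \eqref{eq:infsup} to the complementary ``sine'' bound \eqref{eq:supinf}.
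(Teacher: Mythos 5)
Your proof is correct and follows essentially the same route as the paper's: both reduce the inner supremum in \eqref{eq:infsup} to $\lV \Pi^\fh_\star v^h \rV / \lV v^h \rV$ via the projection property (equivalently \eqref{eq:projectionweak}) and then apply the Pythagorean identity $\lV v^h \rV^2 = \lV \Pi^\fh_\star v^h \rV^2 + \lV v^h - \Pi^\fh_\star v^h \rV^2$ to pass from the ``cosine'' bound to the complementary ``sine'' bound. The only difference is presentational: you argue pointwise in $v^h$ and then take suprema, while the paper writes the same content as a single chain of inf/sup equalities.
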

\begin{proof}
Using \eqref{eq:projectionweak} and the simple equality $\lV q \rV^2 = \lV \Pi^\fh_\star q \rV^2 + \lV q - \Pi^\fh_\star q \rV^2$, for all $q\in L^2(\Omega)$, the equivalence follows from
\begin{align*}
&\inf_{v^h \in \cU^h} \lb \sup_{w^\fh \in \cZ^\fh} \frac{\lv \li v^h, L^\star w^\fh \ri \rv}{\lV v^h \rV \lV L^\star w^\fh \rV}\rb^2 = \inf_{v^h \in \cU^h} \frac{\lV \Pi^\fh_\star v^h \rV^2}{\lV v^h \rV^2} = \inf_{v^h \in \cU^h} \frac{\lV v^h \rV^2 - \lV v^h - \Pi^\fh_\star v^h \rV^2}{\lV v^h \rV^2}\\
&= \inf_{v^h \in \cU^h} \lb 1 - \frac{\lV v^h - \Pi^\fh_\star v^h \rV^2}{\lV v^h \rV^2} \rb = 1 - \sup_{v^h \in \cU^h} \frac{\lV v^h - \Pi^\fh_\star v^h \rV^2}{\lV v^h \rV^2}.\qedhere
\end{align*}
\end{proof}

There are no explicit requirements on the approximation properties of $\cZ^\fh$, as long as \eqref{eq:infsup} holds. However, \cref{prop:supinf} suggests that the approximation properties of $L^\star(\cZ^\fh)$ may not be fully neglected. In fact, if $L^\star(\cZ^\fh)$ possesses such properties, the $(\cL\cL^\star)^{-1}$ method can always be made stable (in the sense that \eqref{eq:infsup} can be enforced) as long as $\fh$ is taken sufficiently small for fixed $\cU^h$. Indeed, let $h$ (i.e., $\cU^h$) be \emph{fixed} and assume that $L^\star(\cZ^\fh)$ satisfies an approximation bound like
\begin{equation}\label{eq:approx}
\lV \Pi^\fh_\star v^h - v^h \rV \le C_{v^h} \fh^\gamma,\quad\forall v^h \in \cU^h,
\end{equation}
for $\gamma > 0$ and a constant $C_{v^h} > 0$ that generally depends on some Sobolev-type norm of $v^h$. Then, one can show, for any $v^h \in \cU^h$, that
\[
\lV \Pi^\fh_\star v^h - v^h \rV \le C_h \fh^\gamma \lV v^h \rV,
\]
where the constant $C_h > 0$ can depend on the space $\cU^h$. Therefore, \eqref{eq:supinf} becomes arbitrary small, when $\fh$ is sufficiently close to zero. More precisely, $(1 - c_I^2)^\halff = \cO(\fh^\gamma)$ and $1 - c_I = \cO(\fh^{2\gamma})$, using $c_I\in[0,1]$ and the trivial $1-c_I^2 = (1-c_I)(1+c_I)$. That is, the inf-sup condition can be enforced with a constant $c_I$ arbitrary close to $1$, as long as $\fh$ is taken sufficiently small, for fixed $h$.

Intuitively, this means that, as $\fh \to 0$, $(L_w L^\star)_\fh^{-1}$ approaches $(L_w L^\star)^{-1}$, the discrete $(\cL\cL^\star)^{-1}$ formulation \eqref{eq:minLLstarinv} approaches the $L^2$-norm minimization \eqref{eq:L2minconf}, and the $(\cL\cL^\star)^{-1}$ approximation, $u^h$, approaches the $L^2$-orthogonal projection of $\hu$ onto $\cU^h$. Indeed, consider the vector $\bhff \in \bbR^N$:
\[
(\bhff)_i = \li \hu, \phi^h_i \ri.
\]
Then, the $L^2$-norm minimization \eqref{eq:L2minconf} induces the linear system
\begin{equation}\label{eq:projalgsystem}
\bMM \bu_p = \bhff,
\end{equation}
where $u^h_p$ denotes the $L^2$-orthogonal projection of $\hu$ onto $\cU^h$ and $\bu_p \in \bbR^N$ is its respective coefficient vector. One can show that
\[
\lv \bA - \bMM \rv = \cO(\fh^\gamma), \quad \lv \bff - \bhff \rv = \cO(\fh^\gamma),
\]
for any vector and its respective matrix norms $\lv \cdot \rv$. Thus, the $(\cL\cL^\star)^{-1}$ linear system \eqref{eq:algsystem} approaches the $L^2$-orthogonal projection linear system \eqref{eq:projalgsystem}, for fixed $h$, as $\fh \to 0$. A well-known perturbation result from linear algebra (see, e.g., \cite[Theorem 2.3.8]{WatkinsLA}) implies that $\bu$ also approaches $\bu_p$:
\[
\lv \bu - \bu_p \rv = \cO(\fh^\gamma) \quad\text{and}\quad \lV u^h - u^h_p \rV = \cO(\fh^\gamma).
\]
Recall that, here, $u^h \in \cU^h$ denotes the approximation obtained by the $(\cL\cL^\star)^{-1}$ method, $\bu \in \bbR^N$ is its respective coefficient vector, and $h$ is fixed as $\fh$ approaches zero.

The above argument does not exclude the possibility that, in general, the ratio $h/\fh$ may potentially need to grow to maintain \eqref{eq:infsup} as $h\to 0$. However, assume $\cU^h$ is an $H^1$ (Lagrangian) finite element space on a quasi-uniform mesh, $\Omega$ is a polyhedral (or polygonal) domain, and it holds
\[
\lV \Pi^\fh_\star v^h - v^h \rV \le C \fh \lV v^h \rV_1,\quad\forall v^h \in \cU^h,
\]
where $\lV \cdot \rV_1$ is the norm on $H^1(\Omega)$ and the constant $C>0$ does not depend on $h$, $\fh$, or $v^h$. That is, at least to a certain extent, the approximation properties of $L^\star(\cZ^\fh)$ are on par with those of $\cU^h$. Let $\fh = h/\tau$ for some constant $\tau \ge 1$. Then, using an inverse inequality \cite[Theorem 4.5.11]{BrennerFEM}, we obtain
\[
\frac{\lV \Pi^\fh_\star v^h - v^h \rV}{\lV v^h \rV} \le \frac{C}{\tau}.
\]
Thus, if $\tau$ is sufficiently large (i.e., $\fh$ is sufficiently small relative to $h$), then \eqref{eq:supinf} (and \eqref{eq:infsup}) can be enforced with $c_I$ arbitrary close to $1$ and the inf-sup condition is maintained as $h \to 0$ by keeping the ratio $h/\fh = \tau$ fixed. Similar to above, observe that $(1 - c_I^2)^\halff = \cO(\tau^{-1})$ and $1 - c_I = \cO(\tau^{-2})$.

In the discrete $(\cL\cL^\star)^{-1}$ formulation \eqref{eq:LLstarinv}, $(L_w L^\star)^{-1}$ is replaced by $(L_w L^\star)_\fh^{-1}$ (i.e., $\cD(L^\star)$ is replaced by $\cZ^\fh$) leading to the loss of the $L^2$-orthogonal projection property of \eqref{eq:L2minconfweak}. However, \cref{thm:errest} shows that if $\cZ^\fh$ is appropriately chosen in relation to $\cU^h$, so that \eqref{eq:infsup} would hold, then the approximation $(L_w L^\star)_\fh^{-1}$ of $(L_w L^\star)^{-1}$ is of sufficient quality to guarantee (quasi-)optimal $L^2$-norm approximations on $\cU^h$ of the exact solution. The above considerations show that under mild assumptions the $(\cL\cL^\star)^{-1}$ method can be made stable (i.e., \eqref{eq:infsup} can be enforced) and under stronger assumptions this can be achieved with a fixed ratio $h/\fh$. Deriving inf-sup conditions like \eqref{eq:infsup} for spaces $\cU^h$, $\cZ^\fh$ and operators $L$, $L^\star$ of interest is currently an open question, especially for $h/\fh$ being fixed and small so that the method is computationally efficient. It is not clear if this can be achieved with common finite element spaces serving as $\cZ^\fh$ or special (ad-hoc) spaces are needed to guarantee \eqref{eq:infsup}. In \cref{sec:numerical}, we investigate numerically the behavior of the method on model problems, using common finite element spaces as $\cZ^\fh$ in which case \eqref{eq:infsup} may not hold uniformly.

\section{Other \texorpdfstring{$\cL\cL^\star$}{LL*}-type methods}
\label{sec:llstar}

This section is devoted to more standard $\cL\cL^\star$-type approaches. All methods here and the $(\cL\cL^\star)^{-1}$ method of the previous section are related as they are founded upon the original $\cL\cL^\star$ method introduced in \cite{2001FOSLLstar}. Here, we consider all formulations on common terms to aid the comparison between them. They are further compared numerically in \cref{sec:numerical}. Here, for simplicity, $\cU^h \subset \cD(L)$, while extensions to more general finite element spaces are discussed in \cref{sec:extension}.

First, consider the (standard) $\cL\cL^\star$ formulation of \cite{2001FOSLLstar}:
\begin{equation}\label{eq:stdLLstarmin}
z^\fh_\star = \argmin_{w^\fh \in \cZ^\fh} \lV L^\star w^\fh - \hu \rV^2.
\end{equation}
The resulting $\cL\cL^\star$ approximation is $u^\fh_\star = L^\star z^\fh_\star \in L^\star(\cZ^\fh)$. The weak form corresponding to \eqref{eq:stdLLstarmin} is
\[
\text{Find } z^\fh\in \cZ^\fh\col \li L^\star z^\fh, L^\star w^\fh \ri = \li f, w^\fh \ri, \quad \forall w^\fh \in \cZ^\fh.
\]
That is, the weak form is \eqref{eq:LLstarweakh} with $q = f$, i.e., $z^\fh_\star = (L_w L^\star)_\fh^{-1} f$ and $u^\fh_\star = \Pi^\fh_\star \hu$. The method provides the best $L^2$-norm approximation of $\hu$ in $L^\star(\cZ^\fh)$. The quality of $u^\fh_\star$ depends on the approximation properties of $L^\star(\cZ^\fh)$. Using the notation in \eqref{eq:matvecdefs}, \eqref{eq:stdLLstarmin} induces the linear system of equations
\begin{equation}\label{eq:algstdLLstar}
\bH \bz_\star = \brff.
\end{equation}

To obtain an approximation on $\cU^h$, the $\cL\cL^\star$ solution, $u^\fh_\star$, can be further projected onto $\cU^h$:
\begin{equation}\label{eq:TSmin}
u^h_{ts} = \argmin_{v^h \in \cU^h} \lV v^h - L^\star z^\fh_\star \rV^2.
\end{equation}
Computationally, this requires solving a linear system with the mass matrix $\bMM$. The minimizations \eqref{eq:stdLLstarmin} and \eqref{eq:TSmin} constitute the \emph{``two-stage method''}. Alternatively, the minimizations in \eqref{eq:stdLLstarmin} and \eqref{eq:TSmin} can be combined resulting in the \emph{``single-stage method''}:
\begin{equation}\label{eq:OSmin}
(u^h_{ss}, z^\fh_\bullet) = \argmin_{(v^h, w^\fh) \in \cU^h \times \cZ^\fh} \lb \omega \lV L^\star w^\fh - \hu \rV^2 + \lV v^h - L^\star w^\fh \rV^2 \rb,
\end{equation}
for a given constant weight $\omega > 0$. Note that $L^\star z^\fh_\bullet  \in L^\star(\cZ^\fh)$ also approximates $\hu$ but it is generally inferior, as a $L^2$-norm approximation, to the standard $\cL\cL^\star$ solution, $u^\fh_\star$, since $u^\fh_\star$ is the best $L^2$-norm approximation of $\hu$ in $L^\star(\cZ^\fh)$. Also, the purpose here is to obtain approximations in $\cU^h$. Therefore, we concentrate on $u^h_{ss}$. Formulation \eqref{eq:OSmin} resembles the ``hybrid method'' introduced in \cite{2013HybridLS} with the difference that the first-order system least-squares (FOSLS) term is not present in \eqref{eq:OSmin}.

As in \cref{ssec:linalg}, \eqref{eq:TSmin} and \eqref{eq:OSmin} induce the respective block linear systems (cf., \eqref{eq:algblocksystem})
\begin{align}
\begin{bmatrix*}[l]
  \phantom{-}\bH & \\
  -\bL^T & \bMM\,
\end{bmatrix*}
\begin{bmatrix*}[c]
  \bz_\star\, \\
  \bu_{ts}
\end{bmatrix*}
&=
\begin{bmatrix*}[c]
  \brff\, \\
  \bzero
\end{bmatrix*},\nn\\
\bbA_{ss}
\begin{bmatrix*}[c]
  \bz_\bullet\, \\
  \bu_{ss}
\end{bmatrix*}
=
\begin{bmatrix*}[l]
  (\omega + 1)\bH & \,-\bL\, \\
  \phantom{(\omega + )}\,\mathopen{}-\bL^T & \phantom{-}\bMM\,
\end{bmatrix*}
\begin{bmatrix*}[c]
  \bz_\bullet\, \\
  \bu_{ss}
\end{bmatrix*}
&=
\begin{bmatrix*}[c]
  \omega \brff\, \\
  \bzero
\end{bmatrix*}.\label{eq:OSalgblocksystem}
\end{align}
Similar to \eqref{eq:algsystem0}, $\bz_\star$ and $\bz_\bullet$ can be eliminated, resulting in problems involving only $\bu_{ts}$ and $\bu_{ss}$. Namely, using the notation $\bff = \bL^T \bH^{-1} \brff \in \bbR^N$ introduced above \eqref{eq:algsystem}, the algebraic systems for the respective Schur complements corresponding to the methods in this paper are the following:
\begin{alignat}{3}
\bA &\bu_{inv} &&= \bff\quad &&\text{($(\cL\cL^\star)^{-1}$ method)},\label{eq:LLstarinvalgsystem}\\
\bMM &\bu_{ts} &&= \bff\quad &&\text{(two-stage method)},\label{eq:TSalgsystem}\\
[(\omega + 1) \bMM - \bA] &\bu_{ss} &&= \omega \bff\quad &&\text{(single-stage method)}.\label{eq:OSalgsystem}
\end{alignat}

\Cref{prop:Aprop}\ref{cor:L2proj} demonstrates that the algebraic system \eqref{eq:LLstarinvalgsystem} precisely corresponds to the least-squares problem \eqref{eq:minLLstarinvproj} that minimizes the $L^\star(\cZ^\fh)$ component of the error. It is possible to obtain similar minimization problems that characterize the solutions to \eqref{eq:TSalgsystem} and \eqref{eq:OSalgsystem} in relation to the exact solution, $\hu$, aiding the comparison between the methods. Namely, the algebraic systems \eqref{eq:LLstarinvalgsystem}, \eqref{eq:TSalgsystem}, and \eqref{eq:OSalgsystem} are associated with the following respective least-squares problems:
\begin{align}
u^h_{inv} &= \argmin_{v^h \in \cU^h} \lV \Pi^\fh_\star (v^h - \hu) \rV^2,\label{eq:minLLstarinvproj1}\\
u^h_{ts} &= \argmin_{v^h \in \cU^h} \lb \lV \Pi^\fh_\star (v^h - \hu) \rV^2 + \lV v^h - \Pi^\fh_\star v^h \rV^2 \rb,\label{eq:minTSproj}\\
u^h_{ss} &= \argmin_{v^h \in \cU^h} \lb \lV \Pi^\fh_\star (v^h - \hu) \rV^2 +  \frac{\omega+1}{\omega} \lV v^h - \Pi^\fh_\star v^h \rV^2 \rb.\label{eq:minOSproj}
\end{align}
It is not difficult to derive \eqref{eq:minTSproj} from \eqref{eq:TSmin}. By observing that $\lV v^h - \Pi^\fh_\star \hu \rV^2 = \lV \Pi^\fh_\star (v^h - \hu) \rV^2 + \lV (I - \Pi^\fh_\star) v^h \rV^2$, for any $v^h \in \cU^h$, it is easy to see that the weak form corresponding to \eqref{eq:minTSproj} induces the linear system \eqref{eq:TSalgsystem}. Similarly, \eqref{eq:minOSproj} can be derived from \eqref{eq:OSmin} but it is more challenging. Nevertheless, it is easy to verify that \eqref{eq:OSalgsystem} can be associated with the weak formulation
\[
\text{Find } u^h \in \cU^h\col \frac{\omega+1}{\omega} \li (I - \Pi^\fh_\star) u^h, v^h \ri + \li \Pi^\fh_\star u^h, v^h \ri = \li \Pi^\fh_\star \hu, v^h \ri,\quad\forall v^h \in \cU^h,
\]
which, clearly, corresponds to the minimization \eqref{eq:minOSproj}; see the proof of \cref{thm:OSTSerrest} below.

In comparison, the $L^2$-orthogonal projection is defined as
\[
u^h_{p} = \argmin_{v^h \in \cU^h} \lV v^h - \hu \rV^2 = \argmin_{v^h \in \cU^h} \lb \lV \Pi^\fh_\star (v^h - \hu) \rV^2 + \lV (I - \Pi^\fh_\star) (v^h - \hu) \rV^2 \rb,
\]
but this formulation is generally infeasible because of the second term. Indeed, $(I - \Pi^\fh_\star)\hu$ is generally not computationally obtainable, whereas $\Pi^\fh_\star \hu$ is available via the $\cL\cL^\star$ method \eqref{eq:stdLLstarmin}. Thus, all three methods \eqref{eq:minLLstarinvproj1}, \eqref{eq:minTSproj}, and \eqref{eq:minOSproj} trade the $L^2$-orthogonal projection for computational feasibility. The difference is that \eqref{eq:minLLstarinvproj1} drops the term for which there is no information, while \eqref{eq:minTSproj} and \eqref{eq:minOSproj} replace it with ``regularization'' terms for the size of $(I - \Pi^\fh_\star) v^h$, i.e., they only drop $(I - \Pi^\fh_\star)\hu$. Note that the second terms in \eqref{eq:minTSproj} and \eqref{eq:minOSproj} cannot be expected to contribute to the quality of approximation, since they do not contain information on $\hu$. However, those terms ``stabilize'' the methods and the matrices in \eqref{eq:TSalgsystem} and \eqref{eq:OSalgsystem} are always SPD (hence, nonsingular).

\begin{remark}
Observe that the formulation in \eqref{eq:minOSproj} approaches the one in \eqref{eq:minTSproj} as $\omega \to \infty$. In fact, the two-stage method can be viewed as an extreme case of the single-stage method, when $\omega = \infty$.
\end{remark}

Next, error estimates for the single- and two-stage methods are derived.

\begin{theorem}[error estimate]\label{thm:OSTSerrest}
The following error estimate holds:
\[
\lV u^h_\diamond - \hu \rV \le s \inf_{v^h \in \cU^h} \lV v^h - \hu \rV + (s+1)\inf_{w^\fh \in \cZ^\fh} \lV L^\star w^\fh - \hu \rV,
\]
where $u^h_\diamond = \{u^h_{ss} \text{ or } u^h_{ts}\}$, $s = \{(\omega+1)/\omega \text{ or } 1\}$ for the single- and two-stage methods, respectively.
\end{theorem}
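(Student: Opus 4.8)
The plan is to handle both methods at once by recognizing each as an \emph{exact} orthogonal projection in a single weighted inner product, and then to measure the error against $\hu$ by two triangle inequalities. To this end I would introduce on $L^2(\Omega)$ the symmetric bilinear form
\[
a(p,q) = \li \Pi^\fh_\star p, \Pi^\fh_\star q \ri + s \li (I-\Pi^\fh_\star)p, (I-\Pi^\fh_\star)q \ri,
\]
where $s = 1$ for the two-stage and $s = (\omega+1)/\omega$ for the single-stage method (so $s \ge 1$ in both cases), and set $\lV q \rV_a^2 = a(q,q)$. Since $s>0$ and $\Pi^\fh_\star$ is a self-adjoint idempotent (the $L^2$-orthogonal projection onto $L^\star(\cZ^\fh)$ by \cref{lem:L2proj}), $a$ is an inner product on $L^2(\Omega)$. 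Combining this with the Pythagorean identity $\lV q \rV^2 = \lV \Pi^\fh_\star q \rV^2 + \lV (I-\Pi^\fh_\star)q \rV^2$ and $s \ge 1$ immediately yields the norm equivalence $\lV q \rV \le \lV q \rV_a \le \sqrt{s}\,\lV q \rV$ for all $q \in L^2(\Omega)$, which is all the bookkeeping about the weight that I will need.

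Next I would show that $u^h_\diamond$ is precisely the $a$-orthogonal projection of the \emph{computable} quantity $\Pi^\fh_\star\hu$ onto $\cU^h$. For the single-stage method this follows from the weak form associated with \eqref{eq:OSalgsystem} (displayed just before this theorem): its left-hand side is exactly $a(u^h_{ss}, v^h)$, and because $(I-\Pi^\fh_\star)\Pi^\fh_\star\hu = 0$ its right-hand side equals $a(\Pi^\fh_\star\hu, v^h)$, so $a(u^h_{ss} - \Pi^\fh_\star\hu, v^h) = 0$ for all $v^h \in \cU^h$. The two-stage case is the specialization $s = 1$, where $a = \li \cdot,\cdot \ri$ and $u^\fh_\star = \Pi^\fh_\star\hu$, so that \eqref{eq:TSmin} reads $u^h_{ts} = \argmin_{v^h\in\cU^h}\lV v^h - \Pi^\fh_\star\hu \rV^2$. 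The decisive point is that this is a genuine orthogonal projection in an inner product, not merely a Galerkin approximation, so optimality holds with constant one: $\lV u^h_\diamond - \Pi^\fh_\star\hu \rV_a \le \lV v^h - \Pi^\fh_\star\hu \rV_a$ for every $v^h \in \cU^h$. This is the step where I would deliberately \emph{not} invoke \cref{lem:likeCea}, whose C\'ea-type factor $1+\beta/\alpha$ is lossy here and would spoil the final constants.

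The remainder is triangle inequalities plus the weight estimate. For arbitrary $v^h \in \cU^h$, using $\lV\cdot\rV \le \lV\cdot\rV_a$, then the optimality above,
\[
\lV u^h_\diamond - \hu \rV \le \lV u^h_\diamond - \Pi^\fh_\star\hu \rV_a + \lV (I-\Pi^\fh_\star)\hu \rV_a \le \lV v^h - \Pi^\fh_\star\hu \rV_a + \lV (I-\Pi^\fh_\star)\hu \rV_a ,
\]
and then splitting $\lV v^h - \Pi^\fh_\star\hu \rV_a \le \lV v^h - \hu \rV_a + \lV (I-\Pi^\fh_\star)\hu \rV_a$. Since $(I-\Pi^\fh_\star)\hu$ lies in the range of $I-\Pi^\fh_\star$, one has $\lV (I-\Pi^\fh_\star)\hu \rV_a = \sqrt{s}\,\lV (I-\Pi^\fh_\star)\hu \rV$, and as $\Pi^\fh_\star$ is the $L^2$-projection onto $L^\star(\cZ^\fh)$, $\lV (I-\Pi^\fh_\star)\hu \rV = \inf_{w^\fh\in\cZ^\fh}\lV L^\star w^\fh - \hu \rV$. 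Collecting terms (and using $\lV v^h-\hu\rV_a \le \sqrt{s}\lV v^h-\hu\rV$) gives $\lV u^h_\diamond - \hu \rV \le \sqrt{s}\,\inf_{v^h}\lV v^h - \hu \rV + 2\sqrt{s}\,\inf_{w^\fh}\lV L^\star w^\fh - \hu \rV$; relaxing via $s \ge 1$, namely $\sqrt{s} \le s$ and $2\sqrt{s} \le s+1$ (the latter being $(\sqrt{s}-1)^2 \ge 0$), yields the stated estimate. The main obstacle is conceptual rather than computational: correctly identifying the projection target as $\Pi^\fh_\star\hu$ rather than $\hu$, which is what turns the asymmetric, $\hu$-free regularization term in \eqref{eq:minTSproj}--\eqref{eq:minOSproj} into an exact projection and isolates the entire cost of discretizing $\cD(L^\star)$ by $\cZ^\fh$ into the single term $\inf_{w^\fh}\lV L^\star w^\fh - \hu \rV$.
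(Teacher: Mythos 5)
Your proof is correct, and its skeleton is the same as the paper's: your form $a$ is in fact identical to the paper's $a_s(p,q) = s \li (I - \Pi^\fh_\star) p, q \ri + \li \Pi^\fh_\star p, q \ri$ (expand both using that $\Pi^\fh_\star$ is self-adjoint and idempotent), the orthogonality is likewise taken with respect to the target $\Pi^\fh_\star \hu$, the error is split through $\Pi^\fh_\star \hu$ by triangle inequalities, and $\lV (I-\Pi^\fh_\star)\hu \rV$ is identified with $\inf_{w^\fh \in \cZ^\fh} \lV L^\star w^\fh - \hu \rV$. The one genuine difference is the quasi-optimality step. The paper applies (classical) C\'ea's lemma in the $L^2$ norm, with coercivity constant $1$ and continuity constant $s$, to get $\lV u^h_\diamond - \Pi^\fh_\star \hu \rV \le s \lV v^h - \Pi^\fh_\star \hu \rV$, and then runs the triangle inequalities in $\lV \cdot \rV$, which lands directly on the constants $s$ and $s+1$. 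You instead exploit that the Galerkin orthogonality in the inner product $a$ means $u^h_\diamond$ is the \emph{exact} $a$-orthogonal projection of $\Pi^\fh_\star \hu$ onto $\cU^h$ (optimality constant $1$ in $\lV \cdot \rV_a$), do all the bookkeeping in the energy norm, and convert to $L^2$ only at the ends via $\lV \cdot \rV \le \lV \cdot \rV_a \le \sqrt{s}\, \lV \cdot \rV$ and $\lV (I-\Pi^\fh_\star)\hu \rV_a = \sqrt{s}\, \lV (I-\Pi^\fh_\star)\hu \rV$. This buys the sharper intermediate estimate $\sqrt{s}\, \inf_{v^h} \lV v^h - \hu \rV + 2\sqrt{s}\, \inf_{w^\fh} \lV L^\star w^\fh - \hu \rV$, which you then deliberately relax to the stated bound using $\sqrt{s} \le s$ and $2\sqrt{s} \le s+1$; in particular your argument shows the theorem's constants are not sharp for the single-stage method ($s > 1$), while for the two-stage method ($s=1$) the two routes give the same constants.
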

\begin{proof}
The weak form associated with \eqref{eq:minOSproj} (or \eqref{eq:minTSproj}), i.e., the one that induces \eqref{eq:OSalgsystem} (or \eqref{eq:TSalgsystem}), is
\[
\text{Find } u^h \in \cU^h\col a_s(u^h, v^h) = \li \Pi^\fh_\star \hu, v^h \ri,\quad\forall v^h \in \cU^h,
\]
where the symmetric bilinear form $a_s\col L^2(\Omega)\times L^2(\Omega) \to \bbR$ is defined as
\[
a_s(p, q) = s \li (I - \Pi^\fh_\star) p, q \ri + \li \Pi^\fh_\star p, q \ri,\quad\forall p,q\in L^2(\Omega).
\]
Clearly, $a_s$ is $L^2$-equivalent, i.e.,
\[
\lV q \rV^2 \le a_s(q, q) \le s \lV q \rV^2,\quad\forall q\in L^2(\Omega).
\]
Also, the following ``orthogonality'' property holds:
\[
a_s(u^h_\diamond - \Pi^\fh_\star \hu, v^h) = 0,\quad\forall v^h \in \cU^h.
\]
Thus, C\'ea's lemma implies
\[
\lV u^h_\diamond - \Pi^\fh_\star \hu \rV \le s \lV v^h - \Pi^\fh_\star \hu \rV, \quad\forall v^h \in \cU^h.
\]
Using this, for any $v^h \in \cU^h$,
\begin{align*}
\lV u^h_\diamond - \hu \rV &\le \lV u^h_\diamond - \Pi^\fh_\star \hu \rV + \lV \Pi^\fh_\star \hu - \hu \rV \le s \lV v^h - \Pi^\fh_\star \hu \rV + \lV \Pi^\fh_\star \hu - \hu \rV\\
&\le s \lV v^h - \hu \rV + (s + 1)\lV \Pi^\fh_\star \hu - \hu \rV.\qedhere
\end{align*}
\end{proof}

\begin{remark}
Notice that the $(\cL\cL^\star)^{-1}$ method is the only one of the three that possesses an ``orthogonality'' property like \eqref{eq:Galerkinorth} with respect to the exact solution, $\hu$, whereas \eqref{eq:minTSproj} and \eqref{eq:minOSproj} satisfy such a property for the projection $\Pi^\fh_\star \hu$. Also, the $(\cL\cL^\star)^{-1}$ method is the only one that does not have a uniform $L^2$-coercivity and depends on \eqref{eq:infsup} to satisfy a discrete (i.e., on $\cU^h$) $L^2$-coercivity.
\end{remark}

\Cref{thm:OSTSerrest} suggests that the quality of the solutions in $\cU^h$ obtained by the single- and two-stage methods can depend not only on the approximation properties of $\cU^h$ but also on those of $L^\star(\cZ^\fh)$. In view of \eqref{eq:TSmin} and \eqref{eq:OSmin}, this can be expected since $L^\star(\cZ^\fh)$ is the only ``connection'' between the resulting solutions in $\cU^h$ and $\hu$. According to \cref{thm:OSTSerrest}, optimal rates of convergence are obtainable when the approximation properties of $L^\star(\cZ^\fh)$ are not worse than those of $\cU^h$ and an optimal setting would be if they are on par. In particular, when $\cU^h \subset L^\star(\cZ^\fh)$, then all three methods coincide ($\bA = \bMM$, \eqref{eq:infsup} holds with $c_I = 1$, and the systems \eqref{eq:LLstarinvalgsystem}, \eqref{eq:TSalgsystem}, \eqref{eq:OSalgsystem}, and \eqref{eq:projalgsystem} coincide), and they provide the $L^2$-orthogonal projection of $\hu$ onto $\cU^h$, i.e., $u^h_{inv} = u^h_{ts} = u^h_{ss} = u^h_{p}$. In general, for fixed $h$ and assuming that the property \eqref{eq:approx} holds, a similar argument to the one following \cref{prop:supinf} shows that the linear systems \eqref{eq:LLstarinvalgsystem}, \eqref{eq:TSalgsystem}, and \eqref{eq:OSalgsystem} approach \eqref{eq:projalgsystem} as $\fh \to 0$ and $\lV u^h_\diamond - u^h_p \rV = \cO(\fh^\gamma)$, where $u^h_\diamond = \set{u^h_{inv}, u^h_{ts}, \text{ or } u^h_{ss}}$. In the case $u^h_\diamond = u^h_{ss}$, the constant in the $\cO$-notation depends on $1/\omega$. That is, for fixed $h$, the three approaches converge to the same method as $\fh \to 0$, which is the $L^2$-orthogonal projection \eqref{eq:L2minconf}.

\section{Implementation and preconditioning}
\label{sec:implementation}

In this section, the implementation and preconditioning of the linear systems introduced in the previous sections is discussed. Particularly, we consider Krylov methods with block preconditioners.

The $(\cL\cL^\star)^{-1}$ method can be implemented similarly to the $H^{-1}$ method in \cite{1997NegFOSLS}. In view of \eqref{eq:specequiv}, the conjugate gradient method (CG) is potentially (depending on \eqref{eq:infsup}) an adequate choice for solving \eqref{eq:algsystem}. Obtaining a matrix-vector product with $\bA$ on each CG iteration requires computing the effect of $(L_w L^\star)_\fh^{-1}$, i.e., numerically inverting $\bH$. As in \cite{1997NegFOSLS}, $\bH^{-1}$ can be replaced by a SPD preconditioner $\bB^{-1}$. This is equivalent to replacing $(L_w L^\star)_\fh^{-1}\col \cD'(L^\star) \to \cZ^\fh$ with a respective operator $B_\fh^{-1}\col \cD'(L^\star) \to \cZ^\fh$. It results in \eqref{eq:minLLstarinv} being replaced by the modified minimization
\begin{equation}\label{eq:modmin}
\tu^h_{inv} = \argmin_{v^h \in \cU^h} \li B_\fh^{-1} (L v^h - f), L v^h - f \ri.
\end{equation}
More precisely, for $\ell \in \cD'(L^\star)$, $\tz^\fh = B_\fh^{-1} \ell$ with coefficients $\btz \in \bbR^M$ is defined as $\btz = \bB^{-1} \bell$, with $\bell \in \bbR^M$, $(\bell)_i = \ell(\psi^\fh_i)$, whereas, for $z^\fh = (L_w L^\star)_\fh^{-1} \ell$ with coefficients $\bz \in \bbR^M$, it holds $\bz = \bH^{-1} \bell$.

The weak form, associated with the minimization problem \eqref{eq:modmin}, induces the linear system
\[
\btA \btu_{inv} = \btff,
\]
where $\btA \in \bbR^{N\times N}$, $\btff \in \bbR^N$, and $(\btA)_{ij} = \li B_\fh^{-1} L \phi^h_j, L \phi^h_i \ri$, $(\btff)_i = \li B_\fh^{-1} f, L \phi^h_i \ri$.

The matrix $\btA$ (as well as $\bA$) is generally dense and it is unpractical to explicitly assemble it. However, Krylov methods can clearly be used in a matrix-free way. Matrix-vector products can be computed without assembling $\btA$. Indeed, similar to \eqref{eq:algsystem0}, $\btA \bv = \bL^T \bB^{-1} \bL \bv$, for $\bv \in \bbR^N$. Thus, computing $\btA \bv$ requires a single application of $\bB^{-1}$ (i.e, of $B_\fh^{-1}$) and matrix-vector products with $\bL$ and $\bL^T$, which can be efficiently assembled. Similarly, $\btff$ can be computed.

If $\bB$ is spectrally equivalent to $\bH$, it holds, for some constants $c_s, C_s > 0$,
\[
c_s\, \bz^T \bH^{-1} \bz \le \bz^T \bB^{-1} \bz \le C_s\, \bz^T \bH^{-1} \bz,\quad\forall \bz \in \bbR^M.
\]
Then, similar to \cref{thm:errest}, the following error estimate can be shown:
\[
\lV \tu^h_{inv} - \hu \rV \le \lp 1 + \frac{\sqrt{C_s}}{\sqrt{c_s} c_I} \rp \inf_{v^h \in \cU^h} \lV v^h - \hu \rV.
\]
That is, the modified minimization \eqref{eq:modmin} maintains the properties of the original $(\cL\cL^\star)^{-1}$ method \eqref{eq:minLLstarinv} when $\bB$ is spectrally equivalent to $\bH$.

Obtaining spectrally equivalent preconditioners of $\bH$ for hyperbolic $L^\star$ is quite challenging. In the above approach, the quality of the preconditioner can affect not only the solver but also the minimization formulation and the quality of the approximation by $\tu^h_{inv}$. Therefore, we propose a different path here, using the same tools (the preconditioner $\bB$ and Krylov solvers) and solving the block system \eqref{eq:algblocksystem} directly, thus maintaining the original $(\cL\cL^\star)^{-1}$ principle \eqref{eq:minLLstarinv}.

Based on well-known block factorizations of $2\times 2$ block matrices, we obtain the following symmetric block preconditioner of the matrix $\bbA$ in \eqref{eq:algblocksystem} (see also \cite{2011PrecondPDESystems}):
\begin{equation}\label{eq:Binv}
\bbB_{inv}^{-1} =
\begin{bmatrix*}[c]
\bI &-\bB^{-1}\bL\\
&\bI
\end{bmatrix*}
\begin{bmatrix*}[c]
\bB^{-1}\\
&\bZ_{inv}^{-1}
\end{bmatrix*}
\begin{bmatrix*}[c]
\bI\\
-\bL^T\bB^{-1} & \bI
\end{bmatrix*}
=
\begin{bmatrix*}[c]
\bI &-\bB^{-1}\bL\\
&\bI
\end{bmatrix*}
\begin{bmatrix*}[c]
\bB^{-1}\\
-\bZ_{inv}^{-1}\bL^T\bB^{-1} &\bZ_{inv}^{-1}
\end{bmatrix*},
\end{equation}
where $\bZ_{inv}$ is a symmetric preconditioner of the Schur complement $\bS_{inv} = -\bL^T \bH^{-1} \bL = -\bA$. Notice that, by \cref{prop:Aprop}\ref{cor:Aspsd}, $\bS_{inv}$ is negative semidefinite. Hence, $\bbA$ is generally an indefinite matrix. Also, $\bbB_{inv}$ is positive definite, when $\bZ_{inv}$ is positive definite and indefinite otherwise. By \eqref{eq:specequiv}, depending on \eqref{eq:infsup}, $\bS_{inv}$ is spectrally equivalent to $-\bMM$. Observe that applying $\bbB_{inv}^{-1}$ requires two applications of $\bB^{-1}$ and two of $\bZ_{inv}^{-1}$.

Similarly, the following SPD preconditioner of the matrix $\bbA_{ss}$ in \eqref{eq:OSalgblocksystem} can be formulated:
\begin{equation}\label{eq:Bss}
\bbB_{ss}^{-1} =
\begin{bmatrix*}[c]
\bI &\bB_\omega^{-1}\bL\\
&\bI
\end{bmatrix*}
\begin{bmatrix*}[c]
\bB_\omega^{-1}\\
&\bZ_{ss}^{-1}
\end{bmatrix*}
\begin{bmatrix*}[c]
\bI\\
\bL^T\bB_\omega^{-1} & \bI
\end{bmatrix*}
=
\begin{bmatrix*}[c]
\bI &\bB_\omega^{-1}\bL\\
&\bI
\end{bmatrix*}
\begin{bmatrix*}[c]
\bB_\omega^{-1}\\
\bZ_{ss}^{-1}\bL^T\bB_\omega^{-1} &\bZ_{ss}^{-1}
\end{bmatrix*},
\end{equation}
where $\bB_\omega^{-1} = (\omega + 1)^{-1}\bB^{-1}$ and $\bZ_{ss}$ is a SPD preconditioner of the Schur complement $\bS_{ss} = \bMM - (\omega + 1)^{-1}\bA$. Note that $\bbA_{ss}$ is SPD and $\bS_{ss}$ is spectrally equivalent to $\bMM$ without requiring \eqref{eq:infsup} (i.e., even when $c_I = 0$) with the equivalence depending on $\omega$.

In \cref{ssec:precond}, we provide preliminary results with the above presented block preconditioners using $\bB^{-1} = \bH^{-1}$ and $\bZ_{ss} = \bI$, $\bZ_{inv} = -\bI$. We are interested in utilizing preconditioners based on algebraic multigrid methods \cite{TrottenbergMG,VassilevskiMG,rootnode} as $\bB^{-1}$, which will be investigated in a follow-up work.

\section{Application to linear hyperbolic problems}
\label{sec:application}

The considerations above are rather general. Here, we comment on certain particularities associated with the application of the methods to the hyperbolic problem \eqref{eq:pde}.

The differential operator in \eqref{eq:pde} can be written as $L = \hL + \sigma I$, where $\hL u = \gradd \bb u$. Then $L^\star = \hL^\star + \sigma I$, where integration by parts (Green's formula) \cite{GiraultFEM} implies $\hL^\star w = -\bb\cdot\grad w$. Thus, the PDE adjoint to \eqref{eq:pde} is also hyperbolic of similar type to \eqref{eq:pde}. In particular, when $\gradd \bb = 0$, then $\hL u = \bb \cdot \grad u$, i.e., $\hL^\star = -\hL$. Furthermore,
\begin{align*}
\cD(L) = \cD(\hL) &= \set{ u \in L^2(\Omega) \where \hL u \in L^2(\Omega) \text{ and } u = 0 \text{ on } \Gamma_I },\\
\cD(L^\star) = \cD(\hL^\star) &= \set{ w \in L^2(\Omega) \where \hL^\star w \in L^2(\Omega) \text{ and }  w = 0 \text{ on } \Gamma_O },
\end{align*}
where $\Gamma_O$ is the outflow portion of the boundary, $\Gamma_O = \set{\bx \in \partial \Omega \where \bn(\bx)\cdot\bb(\bx) > 0}$. Note that the boundary conditions in the definitions of $\cD(L)$ and $\cD(L^\star)$ make sense in terms of traces; see \cite{2004LinearHyperbolic}.

Under reasonable mild assumptions on $\bb$ a Poincar{\'e}-type inequality for $\hL^\star$ is shown in \cite[Lemma 2.4]{2004LinearHyperbolic}. A similar argument shows the respective inequality for $\hL$; cf., \cite[Lemma 6.8]{OlsonPhDthesis}. This covers the assumptions \eqref{asu:LstarPoincare}, \eqref{asu:LPoincare} (as well as \eqref{asu:Lstarsurjective}, by \cref{rem:equivasu}) for the case $\sigma \equiv 0$. The case of $\sigma \not\equiv 0$ is studied in \cite{2001ErrorLS}.

\section{Numerical results}
\label{sec:numerical}

Numerical results are shown in this section, which demonstrate the behavior of the methods presented and studied in this paper. Also, experiments with the block preconditioners of \cref{sec:implementation} are provided. The software used for implementing and testing the methods is FEniCS, cbc.block \cite{fenics}, PETSc \cite{petsc}, and LEAP (a least-squares package based on FEniCS that is under development at University of Colorado, Boulder).

\subsection{Experiments setting}

\begin{figure}
\centering
\includegraphics[width=0.5\textwidth]{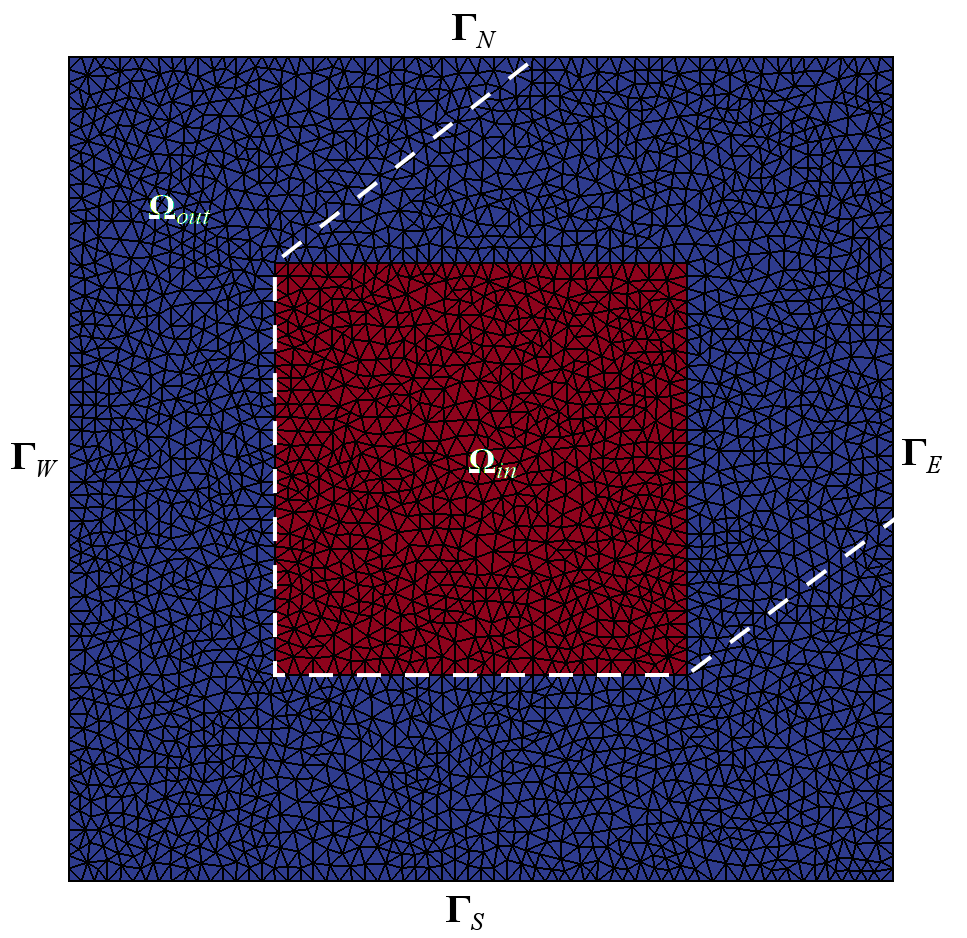}
\caption{Experiments setting.}\label{fig:setting}
\end{figure}

The domain, boundaries, structure of the coefficient $\sigma$, and a typical unstructured quasi-uniform triangular mesh (the coarsest mesh used in our experiments) are shown on \cref{fig:setting}. Namely, the domain is $\Omega = (0,1)^2$. It is split in two subregions -- $\Omega_{in} = (0.25, 0.75)^2$ and $\Omega_{out} = \Omega \setminus \Omega_{in}$. The coefficient $\sigma$ is taken discontinuous -- $\sigma = \sigma_{out}$ in $\Omega_{out}$ and $\sigma = \sigma_{in}$ in $\Omega_{in}$. We choose $\sigma_{out}$ small (i.e., $\Omega_{out}$ is a ``thin'' region) and $\sigma_{in}$ relatively large (i.e., $\Omega_{in}$ is a ``thick'' region). In particular, the experiments here use $\sigma_{out} = 10^{-4}$ and $\sigma_{in} = \set{10^4\text{ or }10}$. The choice $\sigma_{in} = 10^4$ provides a case when very steep exponential layers form, that are not well-resolved by the meshes. In contrast, when $\sigma_{in} = 10$, the exponential layers are less steep and can be resolved by a reasonably fine mesh. In all test cases, $\bb$ is taken $\bb = [\cos\alpha, \sin\alpha]$, where $\alpha = 3\pi/16$. Also, we set $r \equiv 0$ and $g = 1$ on $\Gamma_I$, where, with the current choice of $\bb$, $\Gamma_I = \Gamma_W \cup \Gamma_S$ and $\Gamma_O = \Gamma_E \cup \Gamma_N$. Thus, \eqref{eq:pde} becomes
\[
\begin{alignedat}{3}
\bb\cdot\grad \psi + \sigma \psi &= 0 &&\quad\text{in }\Omega,\\
\psi &= 1 &&\quad\text{on }\Gamma_I.
\end{alignedat}
\]

The dashed lines in \cref{fig:setting} show the locations of the exponential layers with the current $\bb$ and $\sigma$. The unstructured meshes do not follow the characteristics of the problem but they resolve the coefficient $\sigma$ (i.e., they conform to the subregions $\Omega_{in}$ and $\Omega_{out}$). In all tests, we set $\omega = 1$ in \eqref{eq:OSmin}.

\subsection{Convergence experiments}

In this subsection, the convergence, with respect to the $L^2(\Omega)$ norm, of the methods in this paper is demonstrated. In all cases, standard Lagrangian ($\cC^0$ piecewise polynomial) finite element spaces are utilized for $\cU^h$ and $\cZ^\fh$. Based on \cref{prop:Aprop}\ref{cor:Asingular}, it is always ensured that $\dim(\cZ^\fh) > \dim(\cU^h)$.

\begin{figure}
\centering
\subfloat[$\sigma_{in} = 10^4$]{\resizebox{0.49\textwidth}{!}{\input{conv_linquad_in1e4_out1e-4.texx}}\label{fig:conv_linquad_in1e4_out1e-4}}\;\;
\subfloat[$\sigma_{in} = 10$]{\resizebox{0.49\textwidth}{!}{\input{conv_linquad_in10_out1e-4.texx}}\label{fig:conv_linquad_in10_out1e-4}}
\caption{Convergence results. The spaces $\cU^h$ and $\cZ^\fh$ are on the same meshes, $\cU^h$ -- linear, $\cZ^\fh$ -- quadratic.}\label{fig:conv_linquad_out1e-4}
\end{figure}

\begin{figure}
\centering
\subfloat[$\cZ^\fh$ -- cubic]{\resizebox{0.49\textwidth}{!}{\input{conv_lincubic_in10_out1e-4.texx}}\label{fig:conv_linhigherorder_in10_out1e-4a}}\;\;
\subfloat[$\cZ^\fh$ -- quartic]{\resizebox{0.49\textwidth}{!}{\input{conv_linquart_in10_out1e-4.texx}}\label{fig:conv_linhigherorder_in10_out1e-4b}}
\caption{Convergence results. The spaces $\cU^h$ and $\cZ^\fh$ are on the same meshes, $\cU^h$ -- linear, $\sigma_{in} = 10$.}\label{fig:conv_linhigherorder_in10_out1e-4}
\end{figure}

\begin{figure}
\centering
\subfloat[$\sigma_{in} = 10^4$]{\resizebox{0.49\textwidth}{!}{\input{conv_linquint_in1e4_out1e-4.texx}}\label{fig:conv_linquint_in1e4_out1e-4}}\;\;
\subfloat[$\sigma_{in} = 10$]{\resizebox{0.49\textwidth}{!}{\input{conv_linquint_in10_out1e-4.texx}}\label{fig:conv_linquint_in10_out1e-4}}
\caption{Convergence results. The spaces $\cU^h$ and $\cZ^\fh$ are on the same meshes, $\cU^h$ -- linear, $\cZ^\fh$ -- quintic.}\label{fig:conv_linquint_out1e-4}
\end{figure}

\begin{figure}
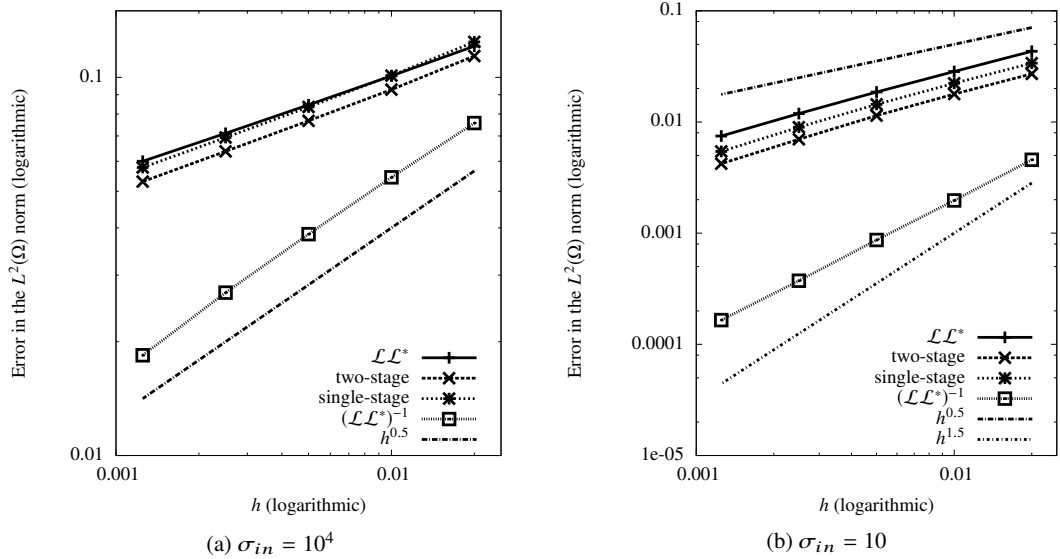

\centering
\subfloat[$\sigma_{in} = 10^4$]{\resizebox{0.49\textwidth}{!}{\input{conv_alllin_in1e4_out1e-4.texx}}\label{fig:conv_alllin_in1e4_out1e-4}}\;\;
\subfloat[$\sigma_{in} = 10$]{\resizebox{0.49\textwidth}{!}{\input{conv_alllin_in10_out1e-4.texx}}}
\caption{Convergence results. The spaces $\cU^h$ and $\cZ^\fh$ are both linear. Every mesh of $\cZ^\fh$ is obtained by a single uniform refinement of the respective $\cU^h$ mesh.}\label{fig:conv_alllin_out1e-4}
\end{figure}

First, results for $\cU^h$ -- linear, $\cZ^\fh$ -- quadratic, both spaces on the same respective meshes, and $\sigma_{in} = 10^4$ are shown on \cref{fig:conv_linquad_in1e4_out1e-4}. Note that, strictly speaking, the exact solution is in the Sobolev space $H^{3/2 - \epsilon}(\Omega)$, for any $\epsilon > 0$. According to the polynomial approximation theory \cite{BrennerFEM}, the optimal asymptotic rate of convergence, of the $L^2$-norm approximations of the exact solution on $\cU^h$, is $h^{3/2 - \epsilon}$. However, the analytical solution possesses very steep exponential layers that on the scale of the meshes behave like discontinuities (which is a case of interest). Therefore, intuitively, until the mesh begins resolving the exponential layers (i.e., before the ``asymptotic regime'' starts settling) the exact solution can, in a sense, be seen as ``discontinuous'', i.e., nearly behaving as a function in $H^{1/2 - \epsilon}(\Omega)$, and a rate of around $h^{1/2}$ can be considered as ``optimal'' initially with the potential of improving as the mesh is refined. More precisely, in view of the interpolation bounds of the polynomial approximation theory, the $H^{3/2 - \epsilon}(\Omega)$ norm of the analytical solution is rather large and this is associated with a delayed ``asymptotic regime'' of convergence. \Cref{fig:conv_linquad_in1e4_out1e-4} demonstrates that the $(\cL\cL^\star)^{-1}$ method obtains an $h^{1/2}$ rate. In comparison, the $\cL\cL^\star$, single-, and two-stage methods are slower to converge. Owing to \cref{thm:OSTSerrest}, this can be explained with the approximation properties of $L^\star(\cZ^\fh)$. It is interesting to notice that, in view of \cref{thm:OSTSerrest}, the single- and two-stage methods demonstrate slightly ``enhanced'' convergence rates compared to the $\cL\cL^\star$ method. The current theory cannot predict or explain such a behavior. It is unclear if this ``enhanced'' rate will be maintained once the ``asymptotic regime'' fully settles.

Note that the $h^{1/2}$ convergence of the $(\cL\cL^\star)^{-1}$ approximations in \cref{fig:conv_linquad_in1e4_out1e-4} does not necessarily mean that \eqref{eq:infsup} holds with $c_I$ independent of $h$. For example, observe \cref{fig:conv_linquad_in10_out1e-4}, which shows the same experiment as above but with $\sigma_{in} = 10$. That is, the exponential layers are now well-resolved by the meshes and the optimal asymptotic rate $h^{3/2 - \epsilon}$ is achievable. Notice that all methods, including the $(\cL\cL^\star)^{-1}$ method, demonstrate suboptimal rates. The $\cL\cL^\star$, single-, and two-stage methods this time converge with equal rates but slower than the $(\cL\cL^\star)^{-1}$ method and their respective errors are close to each other. The suboptimal convergence of the $(\cL\cL^\star)^{-1}$ approximations indicates that \eqref{eq:infsup} does not hold uniformly for this choice of spaces, i.e., $c_I$ in \eqref{eq:infsup} depends on $h$. \Cref{fig:conv_linquad_in10_out1e-4,fig:conv_linhigherorder_in10_out1e-4a,fig:conv_linhigherorder_in10_out1e-4b,fig:conv_linquint_in10_out1e-4} track the change (improvement) in the errors of the methods as the order of $\cZ^\fh$ is increased, for $\sigma_{in} = 10$.

Next, \Cref{fig:conv_linquint_out1e-4} (compare with \cref{fig:conv_linquad_out1e-4}) shows an experiment with quintic $\cZ^\fh$. Piecewise polynomial finite element spaces on triangles of order five (or higher) are special in the sense that they contain the space associated with the Argyris element; cf. \cite{BrennerFEM}. That is, $\cZ^\fh$ has a $\cC^1$ piecewise polynomial finite element subspace and $L^\star(\cZ^\fh)$ contains a $\cC^0$ piecewise polynomial finite element space. The results on \cref{fig:conv_linquint_out1e-4} suggest that this is not sufficient for the approximation properties of $L^\star(\cZ^\fh)$ to be on par with those of $\cU^h$, but \eqref{eq:infsup} may potentially hold. This is a subject of future investigation. Observe also that increasing the order of $\cZ^\fh$ in \cref{fig:conv_linquint_in1e4_out1e-4}, compared to \cref{fig:conv_linquad_in1e4_out1e-4}, results in improved errors for the $\cL\cL^\star$, single-, and two-stage methods, whereas this does not initially lead to an error improvement for the $(\cL\cL^\star)^{-1}$ method and only on finer meshes such an improvement can be observed. This creates the impression in \cref{fig:conv_linquint_in1e4_out1e-4} that the $(\cL\cL^\star)^{-1}$ solution converges with a rate higher than $h^\halff$. However, this is due to the sudden improvement in the size of the error, since the mesh is not sufficiently fine to resolve the steep layers and the asymptotic rate is not yet reached. That is, we are still in a pre-asymptotic regime and a rate around $h^\halff$ is to be expected, even from the actual best $L^2$-norm approximation on $\cU^h$.

The spaces $\cU^h$ and $\cZ^h$ do not need to be on the same mesh. This is demonstrated on \cref{fig:conv_alllin_out1e-4} for the case when $\cZ^h$ utilizes refined versions of the respective meshes of $\cU^h$. The results are very similar, with slightly slower rates, to those on \cref{fig:conv_linquad_out1e-4}.

Interestingly, in view of \cref{fig:conv_linquad_in1e4_out1e-4,fig:conv_linquint_in1e4_out1e-4,fig:conv_alllin_in1e4_out1e-4}, the losses of optimal rate ($h^{3/2 - \epsilon}$), caused by the unresolved exponential layers and the dependence of $c_I$ on $h$, do not seem to add up in the results for the $(\cL\cL^\star)^{-1}$ method. It seems that the slowest non-optimality dominates, which here is mostly the non-optimality of the mesh, and we obtain a rate of around $h^{1/2}$.

The methods in this paper target approximations in the $L^2(\Omega)$ norm and, as a result, provide a much better resolution of steep layers than a standard least-squares approach. However, more oscillations are now produced, which, due to the nature of the $L^2(\Omega)$ norm, do not prohibit convergence. Particularly, the $(\cL\cL^\star)^{-1}$ method produces substantially less oscillations than the $\cL\cL^*$-type methods and, interestingly, the $(\cL\cL^\star)^{-1}$ approach provides a slightly better resolution of steep layers, both contributing to smaller $L^2$-norm errors. This aligns with the observations that, in terms of solution quality, it is better to use $\cZ^\fh$ to approximate $(L_w L^*)^\mone$ than $L^*(\cZ^\fh)$ to approximate $\hu$ or, similarly, it is better to relate $\cU^h$ and $L^*(\cZ^\fh)$ via an inf-sup condition than via approximation properties. Particular plots of the solutions produced by the methods can be seen in \cite{PhDthesis}.

In our experiments, we observe that the local $L^2$-norm error in subregions away from the steep layers, where the solution is smooth, decreases with higher rates. Namely, in the case of a large contrast in $\sigma$ (i.e., corresponding to \cref{fig:conv_linquad_in1e4_out1e-4,fig:conv_linquint_in1e4_out1e-4}) the rate of local convergence is around $\cO(h)$ (where the optimal local rate for linear elements is $\cO(h^2)$) for all methods and both choices of $\cZ^\fh$ (quadratic and quintic). This demonstrates that the ``polluting'' effect of the steep layers is limited to some extent and requires further investigation.

Finally, observe that in the majority of the results above the $L^2$-norm errors of the single- and two-stage methods are smaller than the respective errors of the $\cL\cL^\star$ method. In all tests presented here, the two-stage method exhibits smaller errors compared to the single-stage method. In some cases, the convergence rates of the single- and two-stage approximations are ``enhanced'' (better) in comparison to the respective rates of the $\cL\cL^\star$ method. It is not completely clear if this ``enhanced'' error behavior is maintained asymptotically as $h \to 0$. Also, notice that as the order of $\cZ^\fh$ is increased, the error graphs of the different methods get more grouped together. This is predicted by the theoretical considerations in the previous sections.

\subsection{Preconditioning experiments}
\label{ssec:precond}

\begin{table}
\caption{Number of preconditioned GMRES(30) iterations for the $(\cL\cL^\star)^{-1}$ system \eqref{eq:algblocksystem} using relative tolerance $10^{-6}$ and the preconditioner $\bbB_{inv}^{-1}$ in \eqref{eq:Binv} with $\bB^{-1} = \bH^{-1}$ and $\bZ_{inv} = -\bI$.}\label{tbl:solver_inv_alllin_out1e-4}
\centering
\tabsize
\begin{tabular}{ l l r r c c }
\toprule
& & & & $\sigma_{in} = 10^4$ & $\sigma_{in} = 10$ \\
\cmidrule(lr){5-5}\cmidrule(lr){6-6}
$h$ & $\fh$ & $\dim(\cU^h)$ & $\dim(\cZ^\fh)$ & iterations & iterations \\
\midrule
0.02 & 0.01 & 3226 & 12645 & 73 & 51 \\
0.01 & 0.005 & 12645 & 50065 & 105 & 67 \\
0.005 & 0.0025 & 50065 & 199233 & 147 & 84 \\
0.0025 & 0.00125 & 199233 & 794881 & 201 & 106 \\
0.00125 & 0.000625 & 794881 & 3175425 & 255 & 119 \\
\bottomrule
\end{tabular}
\end{table}

\begin{table}
\caption{Number of preconditioned CG iterations for the single-stage system \eqref{eq:OSalgblocksystem} using relative tolerance $10^{-6}$ and the preconditioner $\bbB_{ss}^{-1}$ in \eqref{eq:Bss} with $\bB^{-1} = \bH^{-1}$ and $\bZ_{ss} = \bI$.}\label{tbl:solver_os_alllin_out1e-4}
\centering
\tabsize
\begin{tabular}{ l l r r c c }
\toprule
& & & & $\sigma_{in} = 10^4$ & $\sigma_{in} = 10$ \\
\cmidrule(lr){5-5}\cmidrule(lr){6-6}
$h$ & $\fh$ & $\dim(\cU^h)$ & $\dim(\cZ^\fh)$ & iterations & iterations \\
\midrule
0.02 & 0.01 & 3226 & 12645 & 30 & 25 \\
0.01 & 0.005 & 12645 & 50065 & 31 & 28 \\
0.005 & 0.0025 & 50065 & 199233 & 32 & 31 \\
0.0025 & 0.00125 & 199233 & 794881 & 34 & 34 \\
0.00125 & 0.000625 & 794881 & 3175425 & 35 & 35 \\
\bottomrule
\end{tabular}
\end{table}

Here, preliminary results with the block preconditioners of \cref{sec:implementation} are shown. In particular, we use $\bB^{-1} = \bH^{-1}$, $\bZ_{ss} = \bI$, and $\bZ_{inv} = -\bI$ to provide a basic idea on the behavior of the preconditioners. The effect of $\bB^{-1}$ is computed using a sparse direct solver -- MUMPS \cite{mumps}. All experiments in this subsection are for the case when $\cU^h$ and $\cZ^\fh$ are piecewise linear and $\cZ^\fh$ uses meshes that are obtained from the respective $\cU^h$ meshes by a single uniform refinement, i.e., $\fh = h/2$. That is, the results here correspond to \cref{fig:conv_alllin_out1e-4}. In all tests, the iterative processes are stopped when the overall relative reduction of the norm of the preconditioned residual becomes less than $10^{-6}$.

\Cref{tbl:solver_inv_alllin_out1e-4} shows the number of preconditioned GMRES(30) \cite{SaadNLA,VorstNLA} iterations for the $(\cL\cL^\star)^{-1}$ system \eqref{eq:algblocksystem} using the block preconditioner $\bbB_{inv}^{-1}$ in \eqref{eq:Binv}. Observe that $\bbB_{inv}^{-1}$ is not optimal with the choice $\bZ_{inv} = -\bI$. As discussed in \cref{sec:implementation}, this can be associated with the dependence of $c_I$, in \eqref{eq:infsup}, on the mesh parameter, $h$, and as a result the spectral relation \eqref{eq:specequiv} does not hold uniformly (i.e., it depends on $h$). This suggests that further care is necessary in preconditioning the Schur complement $\bS_{inv}$ and the simple choice $\bZ_{inv} = -\bI$ is insufficient in this case. We plan to further investigate this, together with the utilization of algebraic multigrid as $\bB^{-1}$, in a follow-up work.

In contrast, as discussed in \cref{sec:implementation}, the block preconditioner $\bbB_{ss}^{-1}$ in \eqref{eq:Bss} is optimal for the single-stage system \eqref{eq:OSalgblocksystem} with the choice $\bZ_{ss} = \bI$, independently of \eqref{eq:infsup}. Also, preconditioned CG can be used in this case. Results are shown in \cref{tbl:solver_os_alllin_out1e-4}.

\section{Conclusions and further development}
\label{sec:conclusions}

We proposed the $(\cL\cL^\star)^{-1}$, together with the $\cL\cL^\star$, single-, and two-stage, methods and studied their application to scalar linear hyperbolic PDEs, aiming at obtaining $L^2$-norm approximations on finite element spaces. Error estimates were shown, pointing to the factors that affect the convergence and providing conditions that guarantee optimal rates. Also, numerical results were demonstrated. The methods clearly show $L^2$-norm convergence and often with acceptable rates. The $(\cL\cL^\star)^{-1}$ method demonstrates the best convergence rates but it induces the most difficult linear systems to solve.

The considerations in this paper suggest further directions of research. A few of them are mentioned in the exposition and in \cref{sec:extension}. Some additional topics are the following: combining the $(\cL\cL^\star)^{-1}$ formulation with FOSLS terms in a ``hybrid'' method; the potential of using $\cZ^\fh$ on different meshes (even if we have no freedom to choose the mesh of $\cU^h$, we can select $\cZ^\fh$ freely) that can be better tailored to the particular problem and, thus, obtain $(L_w L^\star)_\fh^{-1}$ that better approximates $(L_w L^\star)^{-1}$, in some sense. The inf-sup condition and its relation to the approximation properties of the finite element spaces is an interesting and very challenging topic. This would allow further comparison between the methods in terms of the derived error estimates. Currently, the numerical results and basic analysis suggest that the requirements on the approximation properties of $L^\star(\cZ^\fh)$ may possibly be stronger than the inf-sup condition. At least, we observe that when $L^\star(\cZ^\fh)$ provides neither on par approximation properties, nor a uniform inf-sup condition, then the  $(\cL\cL^\star)^{-1}$ method seems less affected by the deficiencies of $L^\star(\cZ^\fh)$ in terms of convergence rates, but it may suffer more in terms of the efficiency of the linear solver. Furthermore, it is intriguing to study the influence of the coefficient $\sigma$ on the constant in \eqref{eq:infsup} and, thus, on the behavior of the method, as well as whether and how the Poincar{\'e} constants in \eqref{asu:LstarPoincare}, \eqref{asu:LPoincare} affect \eqref{eq:infsup}.

The proposed block preconditioner is one approach to solving the $(\cL\cL^\star)^{-1}$ and single-stage linear systems. It would be interesting to study the adaptation and utilization of other methods, developed for ``saddle-point problems''. Preconditioning the matrix $\bH$, coming from hyperbolic operators, $L^\star$, also suggests further development, which is applicable beyond the methods of this paper.

\appendix
\section{Generalizing the formulations}
\label{sec:extension}

In this appendix, for completeness, we review possible generalizations and extensions of the formulations in this paper. In particular, we discuss the ``weak'' treatment of the inflow boundary condition and the potential of utilizing general (possibly discontinuous) finite element spaces as $\cU^h$. Considering $\cU^h \subset \cD(L)$, for the $(\cL\cL^\star)^{-1}$, single-, and two-stage methods, and enforcing the boundary data by superposition corresponds to imposing the boundary condition ``strongly''. For the general case, when $\cU^h \subset L^2(\Omega)$ is possibly piecewise discontinuous, it is necessary to impose the boundary condition in a ``weak'' sense (i.e., as a part of the variational formulation).

Recall that, for simplicity, the hyperbolic problem \eqref{eq:pde} was reformulated as the operator equation \eqref{eq:opeq} using superposition to enforce the boundary data $g$ on $\Gamma_I$. In particular, this simplifies the weak formulation associated with the $\cL\cL^\star$ minimization \eqref{eq:stdLLstarmin}, since the exact solution, $\hu$, of \eqref{eq:opeq} is in  $\cD(L)$ (i.e., $\hu = 0$ on $\Gamma_I$). Alternatively, consider the original PDE \eqref{eq:pde} and let $\hpsi$ denote its exact solution. The respective $\cL\cL^\star$ minimization is
\begin{equation}\label{eq:stdLLstarminBC}
\xi^\fh_\star = \argmin_{w^\fh \in \cZ^\fh} \lV L^\star w^\fh - \hpsi \rV^2.
\end{equation}
The resulting $\cL\cL^\star$ approximation is $\psi^\fh_\star = L^\star \xi^\fh_\star = \Pi^\fh_\star \hpsi \in L^\star(\cZ^\fh)$. Using integration by parts (Green's formula), the weak form corresponding to \eqref{eq:stdLLstarminBC} is the following:
\[
\text{Find } z^\fh \in \cZ^\fh\col \li L^\star z^\fh, L^\star w^\fh \ri = \li r, w^\fh \ri - \int_{\Gamma_I} (\bb g)\cdot\bn\; w^\fh \dd\sigma, \quad \forall w^\fh \in \cZ^\fh.
\]
Note that only the right hand side is different and it involves only given data.

Using the $\cL\cL^\star$ principle \eqref{eq:stdLLstarminBC} leads to minor changes in the single- and two-stage formulations. Indeed, it is sufficient to replace $\brff$ with $\brff_b$ and $\bff$ with $\bff_b = \bL^T \bH^{-1} \brff_b \in \bbR^N$ in the respective linear systems above, where $\brff_b \in \bbR^M$ is defined as
\[
(\brff_b)_i = \li r, \psi^\fh_i \ri - \int_{\Gamma_I} (\bb g)\cdot\bn\; \psi^\fh_i \dd\sigma.
\]
Note that this can be combined with a ``strong'' enforcement of the boundary data on $\cU^h$ by standard means of the finite element methods, which demonstrates the flexibility that least-squares often provide. The analysis in \cref{sec:llstar} remains valid. Furthermore, the single- and two-stage methods are clearly general enough and allow the utilization of general finite element spaces $\cU^h \subset L^2(\Omega)$.

As discussed in \cref{sec:properties}, $L_w$ is, in a sense, an extension of the operator $L$ (more precisely, of $\cE L$) on $L^2(\Omega)$. Therefore, the $(\cL\cL^\star)^{-1}$ formulation is extended to general spaces $\cU^h \subset L^2(\Omega)$ by replacing the operator $L$ with its ``weak'' version $L_w$. This idea is already applied, for theoretical purposes, in the proof of \cref{thm:errest} and in \cref{sec:implementation}, when considering the operator $L^\star B_\fh^{-1} L_w$, since the bilinear forms need to be defined on the whole of $L^2(\Omega)$ to obtain error estimates with respect to the $L^2(\Omega)$ norm. In fact, the generalized $(\cL\cL^\star)^{-1}$ method is precisely the one in \eqref{eq:minLLstarinvproj} and \eqref{eq:LLstarinvproj}. Note that this is not only a tool of analysis but results in feasible formulations. Indeed, the weak formulation \eqref{eq:LLstarinvsystem} is already stated in such a general form with the exception of the right hand side, which needs to be modified to accommodate the ``weak'' enforcement of the boundary condition. Again, it is sufficient to replace $\brff$ with $\brff_b$ and $\bff$ with $\bff_b$ in the respective linear systems. Clearly, the analysis in this paper remains valid.
Similarly, the modified (by a preconditioner) formulation \eqref{eq:modmin} can be extended to general finite element spaces $\cU^h \subset L^2(\Omega)$.

In summary, we observed that the $(\cL\cL^\star)^{-1}$, single-, and two-stage methods can be easily generalized to arbitrary finite element spaces $\cU^h \subset L^2(\Omega)$. However, a piecewise discontinuous space $\cU^h$ (which is a case of interest) is rather rich, whereas $L^\star(\cZ^\fh)$ is constrained by requiring $\cZ^\fh \subset \cD(L^\star)$. This poses further difficulties in maintaining \eqref{eq:infsup} or on  par approximation properties of $L^\star(\cZ^\fh)$, regarding the estimate in \cref{thm:OSTSerrest}, when $\cU^h$ is discontinuous. Removing or reducing the constraint $\cZ^\fh \subset \cD(L^\star)$ is a challenging topic and a subject of future work.

Finally, the considerations in this paper are rather general. In the exposition above, for simplicity of notation and since the scalar PDE \eqref{eq:pde} is considered, only $L^2(\Omega)$ is used. Nevertheless, in general, $L$ may come either from a scalar PDE or a first-order system of PDEs. In the latter case, the considerations in this paper can be extended to systems as long as the occurrences of $L^2(\Omega)$ are replaced by the appropriate product $L^2$ spaces with their respective product $L^2$ norms and the assumptions are satisfied. This also suggests a subject of further investigations.

\bibliographystyle{wileyj}
\bibliography{references_finalrev}
\end{document}